\documentclass[10pt]{article}
\usepackage{amsmath,amssymb}
\usepackage{enumerate}
\usepackage{fullpage}

\allowdisplaybreaks[1]

\newtheorem{theorem}{Theorem}[section]

\newtheorem{lemma}[theorem]{Lemma}
\newtheorem{proposition}[theorem]{Proposition}
\newtheorem{corollary}[theorem]{Corollary}
\newtheorem{definition}[theorem]{Definition}
\newtheorem{remark}[theorem]{Remark}
\newtheorem{example}[theorem]{Example}
\newtheorem{question}[theorem]{Question}

\newcommand{\im}{\operatorname{Im}}

\newcommand{\id}{\operatorname{id}}

\newcommand{\Sym}{\operatorname{Sym}}

\newcommand{\End}{\operatorname{End}}

\newcommand{\Aut}{\operatorname{Aut}}

\newcommand{\gr}{\operatorname{gr}}

\newcommand{\FM}{\operatorname{FM}}
\newcommand{\Map}{\operatorname{Map}}

\newenvironment{proof}{\par\noindent{\bf Proof.}}{$\qed$\par\bigskip}
\newcommand{\qed}{\enspace\vrule  height6pt  width4pt  depth2pt}
\usepackage{color}

\begin{document}

\title{Structure monoids of set-theoretic solutions of the Yang--Baxter equation\thanks{The first author was partially supported by the grants
MINECO-FEDER  MTM2017-83487-P and AGAUR 2017SGR1725 (Spain). The
second author is supported in part by Onderzoeksraad of Vrije
Universiteit Brussel and Fonds voor Wetenschappelijk Onderzoek
(Belgium). The third author is supported by Fonds voor Wetenschappelijk Onderzoek (Flanders), via an FWO Aspirant-mandate.}}
\author{Ferran Ced\'o \and Eric Jespers \and Charlotte Verwimp}
\date{}

\maketitle

\begin{abstract}
Given a set-theoretic solution $(X,r)$ of the Yang--Baxter equation,
we denote by $M=M(X,r)$ the structure monoid and by $A=A(X,r)$,
respectively $A'=A'(X,r)$, the left, respectively right, derived
structure monoid of $(X,r)$. It is shown that there exist a left
action of $M$ on $A$ and a right action of $M$ on $A'$ and
1-cocycles $\pi$ and $\pi'$ of $M$ with coefficients in $A$ and in
$A'$ with respect to these actions respectively. We investigate when
the 1-cocycles are injective, surjective or bijective.
 In case $X$ is finite, it turns out  that $\pi$ is bijective if and only
if $(X,r)$ is left non-degenerate, and $\pi'$ is bijective if and
only if $(X,r)$ is right non-degenerate.
In case $(X,r) $ is left non-degenerate, in particular $\pi$ is bijective, we  define a semi-truss structure on $M(X,r)$
and then we show that this naturally induces a set-theoretic solution $(\bar M, \bar r)$ on the least cancellative
image $\bar M= M(X,r)/\eta$ of $M(X,r)$. In case $X$ is naturally embedded in $M(X,r)/\eta$, for example when $(X,r)$ is irretractable, then $\bar r$ is an extension of $r$.
It also is shown that non-degenerate irretractable solutions necessarily are bijective.
\end{abstract}

{\em 2010 Mathematics Subject Classification:} 16T25,20M05.
\bigskip

{\em Key words:} Yang--Baxter equation, Set-theoretic solution,
Structure monoid, 1-cocycle, Semi-truss.
\bigskip

\section{Introduction}

Let $V$ be a vector space over a field $K$. Solutions $R:V\otimes V
\rightarrow V\otimes V$ of the linear braid or Yang--Baxter equation
(abbreviated YBE)
$$ (R \otimes \id_V)\circ (\id_V \otimes R) \circ (R \otimes \id_V) =(\id_V  \otimes R)\circ (R \otimes \id_V)\circ (\id_V \otimes R)$$ on
the vector space $V\otimes V \otimes V$ have led  to several
algebraic structures, including  some classes of bialgebras, quantum
groups and Hopf algebras. Because the variety of solutions remains
elusive, Drinfeld \cite{drinfeld} in 1992 proposed  to consider
solutions that are linearizations  of solutions on a basis of $V$,
these are the so called set-theoretic solutions of the  YBE. Thus a
pair $(X,r)$, where $X$ is a non-empty set and $r: X \times X \rightarrow X
\times X$ is a  map, is called a set-theoretic solution of the YBE
if
$$ (r \times  \id_X)\circ (\id_X \times  r)\circ (r \times  \id_X) = (\id_X \times  r)\circ (r \times  \id_X)\circ (\id_X \times r).$$
For $x,y\in X$, write $r(x,y)=(\sigma_{x}(y),\gamma_{y}(x))$. The
solution $(X, r)$ is said to be  left (resp. right) non-degenerate
if each map $\sigma_x$ (resp. $\gamma_y$) is bijective. A left and
right non-degenerate solution is simply called a non-degenerate
solution. The solution $(X, r)$ is said to be involutive if $r^2 =
\id_{X\times X}$, in particular such a solution is bijective.

This study  started in the seminal papers of Etingof, Schedler and Soloviev \cite{ESS} and
Gateva--Ivanova and Van den Bergh \cite{GIV}. Since then, different aspects of this combinatorial
problem have been developed \cite{GIC,GIV,Lu,RumpAdv,S} and several interesting connections have
been found, such as braid and Garside groups \cite{Chouraqui,Dehornoy}, (semi)groups of I-type
\cite{GIV,JESOKN}, matched pairs of groups \cite{Lu,Takeuchi}, Artin--Schelter regular algebras
\cite{GIQUAD}, Jacobson radical rings and generalizations \cite{CJO, RumpBraces}, regular
subgroups and Hopf--Galois extensions \cite{SmoVen},
affine manifolds \cite{RumpClass}, orderability \cite{BCV,Chouraqui1} and
factorizable groups \cite{Wei}.

It is now well-known that all non-degenerate involutive set-theoretic solutions $(X,r)$ are restrictions of a set-theoretic solution on the structure monoid
 $$M(X,r) =\langle x\in X\mid xy=\sigma_x(y) \gamma_y(x), \text{ for all } x,y\in X \rangle .$$
Furthermore, in this case, the structure group
 $$G(X,r) =\gr ( x\in X\mid xy=\sigma_x(y) \gamma_y(x), \text{ for all } x,y \in X)$$
 and the permutation group $\mathcal{G}(X,r) = \gr ( \sigma_x \mid x\in X )$
have  a brace structure, an algebraic structure introduced by Rump
in \cite{RumpBraces}. Moreover, in \cite{BCJ}, it is shown that all
finite non-degenerate involutive set-theoretic solutions with a
given permutation group, as a brace, can be explicitly constructed.
For this case of finite solutions $(X,r)$, Etingof, Schedler and
Soloviev \cite{ESS} proved that $G(X,r)$ is a finitely generated,
solvable abelian-by-finite group and independently Gateva-Ivanova
and Van den Bergh \cite{GIV} have shown  that $G(X, r)$ is a
Bieberbach group, i.e. $G(X, r)$ is  an abelian-by-finite group,
torsion-free and finitely generated. To deal with arbitrary finite
bijective non-degenerate solutions Guarnieri and Vendramin \cite{GV}
introduced the algebraic structure called a skew brace. Bachiller
\cite{BSol} then also showed that all such solutions can be
described from  finite skew braces.  Lu, Yan and Zhu \cite{Lu}, and
Soloviev \cite{S} showed that for such solutions the structure group
$G(X,r)$ is abelian-by-finite (see also Lebed and Vendramin
\cite{LV} for another proof), and Jespers, Kubat and Van Antwerpen
\cite{JKV} showed  that the structure monoid $M(X,r)$ also is
abelian-by-finite. Note that, if $(X,r)$ is not involutive then  the
canonical map $i: X\rightarrow G(X,r)$ is not necessarily injective
and thus one cannot recover $r$ from the associated solution on
$G(X,r)$, however it can be recovered from the solution associated
to $M(X,r)$.

Also the associated structure algebras, i.e. the  monoid algebra $KM(X,r)$ and the group algebra $KG(X,r)$, where K is any field, have been studied by Jespers and Okni\'nski \cite{JESOKN},
 Gateva-Ivanova and Van den Bergh \cite{GIV}  and Jespers, Kubat and Van Antwerpen \cite{JKV}.
In the latter it is shown that if $(X,r)$ is a left non-degenerate
bijective finite set-theoretic solution then the algebra $KM(X,r)$
(and $KG(X,r)$) is a module-finite normal extension of a commutative
affine subalgebra. In particular, these algebras are Noetherian
PI-algebras of finite Gelfand-Kirillov dimension. Furthermore, it
was shown that many properties, such as being a  domain or  prime,
of the algebra $KM(X,r)$ are equivalent with the solution $(X, r)$
being involutive.

A crucial fact to prove the above results is (see \cite{ESS,JKV,Lu}) that
if $(X,r)$ is a  left non-degenerate solution then the structure monoid $M(X,r)$ is a regular submonoid of the semidirect product
 $$A(X,r) \rtimes \mathcal{G}(X,r),$$
 where
 $$A(X,r) =\langle x\in X \mid x\sigma_{x}(y) =\sigma_{x}(y) \sigma_{\sigma_{x}(y)}(\gamma_{y}(x)), \text{ for all } x,y \in X \rangle , $$
 i.e.
 for any element $a\in A(X,r)$ there is a unique $\phi (a)\in \mathcal{G}(X,r)$ such that
 $(a,\phi (a))\in M(X,r)$. In particular, one has a bijective 1-cocycle $M(X,r) \rightarrow A(X,r)$,
 determined by the natural action of $\mathcal{G}(X,r)$ on $A(X,r)$.
 Here the derived monoid $A(X,r)$ ``encodes'' the relations determined by the map $r^{2}:X^{2}
 \rightarrow X^{2}$.
If, furthermore, the left non-degenerate solution $(X,r)$ is
bijective then  the monoid $A=A(X,r)$ is such that $aA=Aa$, for all
$a\in A$. So $A(X,r)$  consists of normal elements and thus $A$ has
a much richer structure than $M(X,r)$.  For example, if $(X,r)$ is
involutive then $A$ is a free abelian monoid of rank $|X|$. It is
this  ``richer structure'' that has been exploited in several papers
to obtain information on the structure monoid $M(X,r)$ and the
structure algebra $KM(X,r)$.

In this paper we continue these investigations for arbitrary
set-theoretic solutions $(X,r)$. So, $r$ is not necessarily
bijective and $X$ is any set. In the first section we recall the
important result of Gateva-Ivanova and Majid \cite{GIM2008}: there
exists a unique set-theoretic solution $(M,r_M)$ associated to the
structure monoid $M=M(X,r)$ such that the restriction of $r_M$ to
$X^2$ equals $r$. In the second section we introduce two derived
monoids $A(X,r)$ and $A'(X,r)$ and we prove that there is a  unique
$1$-cocycle $\pi : M(X,r) \rightarrow A(X,r)$, with respect to the
natural left action $\lambda': M(X,r)\rightarrow
\text{End}(A(X,r))$, such that $\pi (x)=x$, and a unique 1-cocycle
$\pi' : M(X,r) \rightarrow A'(X,r)$, with respect to natural right
action $\rho' :M(X,r)\rightarrow \text{End}(A'(X,r))$ such that
$\pi'(x)=x$. Hence one gets a monoid homomorphism $f: M(X,r)
\rightarrow A(X,r) \rtimes \im(\lambda'): a\mapsto (\pi (a) ,
\lambda_a')$ and a monoid anti-homomorphism $f': M(X,r) \rightarrow
A'(X,r)^{op} \rtimes \im(\rho'): a\mapsto (\pi'(a),\rho'_a) $, where
$\lambda'_x(y)=\sigma_x(y)$ and $\rho'_x(y)=\gamma_x(y)$, for all
$x,y\in X$. In general these $1$-cocycles are not bijective. But we
investigate when they are respectively injective, respectively
surjective. In case $(X,r)$ is finite the bijectiveness of $\pi$
(respectively $\pi'$) is equivalent with the solution being left
(respectively right) non-degenerate. In Section 4 we prove the
surprising result that any non-degenerate irretractable solution is
necessarily bijective. In Section~5 we  link the algebraic structure
of $M(X,r)$ to that of semi-trusses as introduced by Brzezi\'nski
\cite{Br}. We determine the left cancellative (additive) congruence
$\eta$ on $M(X,r)$ for $(X,r)$ a left non-degenerate solution, and
we  show that we obtain a solution $(M/\eta , \overline{r})$
determined by a semi-truss structure on $M/\eta$.

\section{Solution associated with the structure monoid}
\label{sectiontwo}

 In this section we recall a result of
Gateva-Ivanova and Majid in \cite[Section~3.2]{GIM2008} stating that
any set-theoretic solution $(X,r)$  of the YBE can be extended
to a set-theoretic solution on its structure monoid $M(X,r)$. The
result in \cite{GIM2008} is stated  for bijective solutions but the
proof remains valid without this assumption.

Recall this construction. Let $(X,r)$ be a set-theoretic solution of
the YBE which is not necessarily bijective. We write
$r(x,y)=(\sigma_x(y),\gamma_y(x))$, for all $x,y\in X$. It is known
that $(X,r)$ is a set-theoretic solution of the YBE if and only if
the following conditions hold:
    \begin{eqnarray}
    \sigma_x\sigma_y &=& \sigma_{\sigma_x(y)} \sigma_{\gamma_y(x)},
    \label{YBE1}
    \\ \sigma_{\gamma_{\sigma_{x}(y)}(z)}(\gamma_y(x)) &=& \gamma_{\sigma_{\gamma_x(z)}(y)}(\sigma_z(x)), \label{YBE2}
    \\ \gamma_x\gamma_y &=& \gamma_{\gamma_x(y)}\gamma_{\sigma_y(x)}, \label{YBE3}
    \end{eqnarray}
for all $x,y,z\in X$.

Let $M=M(X,r)$ be the structure monoid of $(X,r)$, that is the multiplicative
monoid with operation $\circ$ and  with presentation
$$M(X,r)=\langle X \mid x\circ y=\sigma_x(y)\circ \gamma_y(x), \mbox{ for all }x,y\in X\rangle.$$
One  defines the following ``left action'' on $M$:
$$\lambda\colon M\longrightarrow \Map(M,M): a\mapsto \lambda_a, $$
with
  $\lambda_1=\id_{M}$,
and for
$x_1,\dots ,x_m,y_1,\dots ,y_n\in X$ and $n>1$,
 $\lambda_{x_1}(1)=1$,
\begin{equation}\label{lambda1}
 \lambda_{x_1}(y_1)=\sigma_{x_1}(y_1),\;\;
\lambda_{x_1}(y_1 \circ \cdots \circ y_n)=\sigma_{x_1}(y_1)\circ
\lambda_{\gamma_{y_1}(x_1)}(y_2\circ \cdots \circ y_n),
\end{equation}
and for $m>1$,
\begin{equation}\label{lambda2}
\lambda_{x_1\circ \dots \circ x_m}=\lambda_{x_1} \circ \cdots \circ
\lambda_{x_m}.
\end{equation}
One also defines a ``right action'' on $M$:
 $$\rho\colon M\longrightarrow \Map(M,M): a\mapsto \rho_a,$$
with $\rho_1=\id_{M}$,  and for
$x_1,\dots ,x_m,y_1,\dots ,y_n\in X$ and $n>1$,
\begin{equation}\label{erho1}
\rho_{x_1}(y_1)=\gamma_{x_1}(y_1),\;\;
\rho_{x_1}(y_1\circ \cdots \circ
y_n)=\rho_{\sigma_{y_n}(x_1)}(y_1\circ \cdots \circ y_{n-1})\circ
\gamma_{x_1}(y_n),\end{equation}
and for $m>1$
\begin{equation}\label{erho2}
\rho_{x_1\circ \dots \circ x_m}=\rho_{x_m}\circ \cdots \circ  \rho_{x_1}.
\end{equation}

In \cite{GIM2008} it is proved that $\lambda$ and $\rho$ are well
defined.  Furthermore,  it is then shown that every set-theoretic
solution $(X,r)$ of the YBE is the restriction of a
set-theoretic solution defined on the structure monoid $M(X,r)$.

\begin{theorem}\cite[Theorem 3.6]{GIM2008} (Gateva-Ivanova and Majid)\label{main}
Let $(X,r)$ be a set-theoretic solution of the YBE. Then
the mapping $\lambda$ is a monoid homomorphism and the mapping
$\rho$ is monoid anti-homomorphism such that
\begin{align}
\rho_b(c\circ a) &= \rho_{\lambda_a(b)}(c)\circ \rho_b(a), \label{eqrho} \\
\lambda_b(a \circ c) &= \lambda_b(a) \circ \lambda_{\rho_a(b)}(c),
\label{eqlambda}
\end{align} for all $a,b,c\in M$.
Furthermore, for $a,b\in M=M(X,r)$,
\begin{equation}\label{eqproduct}
a\circ b=\lambda_{a}(b)\circ\rho_{b}(a).
\end{equation}
Let $r_M\colon M\times M\rightarrow M\times M$ be defined by
$r_M(a,b)=(\lambda_a(b),\rho_b(a))$, for all $a,b\in M$. Then,
$(M,r_M)$ is a set-theoretic solution of the YBE. Obviously, $r_M$
extends the solution $r$.
\end{theorem}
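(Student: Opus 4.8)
The plan is to take for granted, as established in \cite{GIM2008} (with no use of bijectivity, as remarked above), that $\lambda$ and $\rho$ are well defined — i.e. that $\lambda_a$ and $\rho_a$ depend only on $a\in M$, not on the chosen word over $X$ — and to deduce the rest from the defining formulas \eqref{lambda1}--\eqref{erho2} by inductions on word length. Granting well-definedness, the homomorphism and anti-homomorphism assertions are immediate: since every $a\in M$ is a product of generators, the formulas \eqref{lambda2} and \eqref{erho2} (together with $\lambda_1=\rho_1=\id_M$) give $\lambda_{a\circ b}=\lambda_a\circ\lambda_b$ and $\rho_{a\circ b}=\rho_b\circ\rho_a$ for all $a,b\in M$, so $\lambda$ is a monoid homomorphism and $\rho$ a monoid anti-homomorphism $M\to\Map(M,M)$.

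Next I would prove \eqref{eqlambda} and \eqref{eqrho}. For $a,b\in X$ these are exactly the recursions \eqref{lambda1} and \eqref{erho1} (noting $\rho_a(b)=\gamma_a(b)$ on generators), and from there one bootstraps up to all $a,b,c\in M$ by an intertwined induction on word length: write the relevant factor as a generator times a shorter word and regroup using the (anti-)homomorphism property together with the instances of \eqref{eqlambda}, \eqref{eqrho} already available for shorter (or suitably restricted) words; this is routine but not short. Once \eqref{eqlambda} and \eqref{eqrho} are known, the product formula \eqref{eqproduct} follows by a further induction: for $a,b\in X$ it is precisely the defining relation $x\circ y=\sigma_x(y)\circ\gamma_y(x)$ of $M$, and the inductive step splits a generator off the longer factor, regroups via \eqref{eqlambda}/\eqref{eqrho}, and applies the inductive hypothesis.

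It remains to see that $(M,r_M)$ is a solution and that $r_M$ extends $r$. The latter is immediate, since $r_M(x,y)=(\lambda_x(y),\rho_y(x))=(\sigma_x(y),\gamma_y(x))=r(x,y)$ for $x,y\in X$ by \eqref{lambda1}, \eqref{erho1}. For the former I would apply both sides of the braid relation to an arbitrary triple $(a,b,c)\in M^3$ and compare the three output coordinates. Using that $\lambda$ is a homomorphism together with \eqref{eqproduct}, the first coordinates always agree (both reduce to $\lambda_{a\circ b}(c)$); using that $\rho$ is an anti-homomorphism together with \eqref{eqproduct}, the third coordinates always agree (both reduce to $\rho_{b\circ c}(a)$). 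Thus the braid relation on $(a,b,c)$ reduces to the equality of the middle coordinates, which unwinds to
\[
\lambda_{\rho_{\lambda_{b}(c)}(a)}\bigl(\rho_{c}(b)\bigr)=\rho_{\lambda_{\rho_{b}(a)}(c)}\bigl(\lambda_{a}(b)\bigr)\qquad\text{for all }a,b,c\in M;
\]
this is condition \eqref{YBE2} written for the pair $(\lambda,\rho)$ (with variables relabelled). On $X^3$ it holds because $(X,r)$ solves the YBE, so \eqref{YBE2} is available; one then propagates it from $X^3$ to all of $M^3$ by induction on $|a|+|b|+|c|$, splitting a generator off from one of the three entries and rewriting both sides by means of \eqref{eqlambda}, \eqref{eqrho} and the (anti-)homomorphism property so as to reduce to shorter triples.

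The main obstacle is this last propagation of \eqref{YBE2} from $X^3$ to $M^3$: whereas \eqref{YBE1} and \eqref{YBE3} for $(\lambda,\rho)$ follow formally from the homomorphism properties and \eqref{eqproduct} (exactly as in the computation of the first and third coordinates above), the middle identity genuinely uses that $(X,r)$ satisfies the YBE and must be pushed through the recursive definitions of $\lambda$ and $\rho$. The bulk of the labour is the bookkeeping there and in the intertwined induction establishing \eqref{eqlambda}, \eqref{eqrho}; throughout, no $\sigma_x$ or $\gamma_y$ is ever inverted, which is exactly why the argument goes through verbatim without assuming $(X,r)$ non-degenerate or bijective.
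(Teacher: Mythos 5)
The paper offers no proof of this theorem at all — it is quoted from \cite{GIM2008}, with only the remark that the argument there does not use bijectivity — and your sketch correctly reconstructs that argument: the outer coordinates of the braid relation collapse via the (anti-)homomorphism properties together with \eqref{eqproduct}, while the middle coordinate is exactly \eqref{YBE2} propagated from $X^3$ to $M^3$ by induction using \eqref{eqlambda}, \eqref{eqrho} and the product formula. The inductions you defer do close (for instance, when splitting $c=c_1\circ c_2$ the identity $\rho_{\rho_{c_1}(b)}\rho_{\lambda_{b}(c_1)}=\rho_{b\circ c_1}$, itself a consequence of \eqref{eqproduct} and the anti-homomorphism property, is what lets you invoke the hypothesis on the shorter triples), so your route is essentially that of the cited source and no gap remains beyond the acknowledged bookkeeping.
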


Note that if the solution $(X,r)$ is bijective and left and right non-degenerate, i.e. all $\sigma_x$ and $\gamma_x$
are bijective maps, then as in the proof of the above result one can show that the mappings $\sigma_x$ and $\gamma_x$ induce actually left and right actions
on $G=G(X,r)$, say
 $\lambda^{e} : G\rightarrow \Sym (G)$
and $\rho^{e}: G\rightarrow \Sym (G)$; this is Theorem 4 in \cite{Lu}. Furthermore,
the mapping $r_G (a,b) =(\lambda^{e}_a(b),\rho^{e}_b(a))$, for $a,b\in G$,
defines  a set-theoretic solution on $G$.
Note that, in general, the natural map $i:X\rightarrow G$ is not injective.
One obtains that $r_G$ is an extension of the
induced set-theoretic solution
$(i(X),r_{i(X)^{2}})= (i(X),(r_G)_{i(X)^{2}})$.

A natural question is whether one can extend a solution $(X,r)$, via the actions induced from $\sigma_x$ and $\gamma_y$,  to a solution on the structure group.
This however is not possible in general as shown by the following example.
Consider the set-theoretic solution  $(X,\id_{X^{2}})$ on a set $X$ with more than one element. Obviously,
each $\sigma_x $ and $\gamma_x$  is the constant with image $\{ x \}$. Hence, $M=M(X,\id_{X^{2}})$ is the free  monoid
on the set $X$ and $G=G(X,\id_{X^{2}})$ is the free group on $X$.
However, because the  maps $\sigma_x$ are not injective one cannot extend the maps $\sigma_x$ to a monoid homomorphism $\lambda : G\rightarrow \Map (G,G)$
with   $\lambda_x (y)=\sigma_x(y)$, for $y\in G$.

A remarkable fact shown by Lu, Yan and Zhu in \cite{Lu} is that if
one can extend the mappings $\sigma_x$ and $\gamma_x$ to left and right actions on
the structure group then the induced set-theoretic solution is
bijective.

\section{Derived monoids}\label{derived}

Let $(X,r)$ be a set-theoretic solution of the YBE. Write
$r(x,y)=(\sigma_x(y), \gamma_y(x))$,  for all $x,y\in X$. If $(X,r)$
is left non-degenerate, then Soloviev defined in \cite{S} its
derived solution $(X,r')$ by
$$r'(x,y)=(y,\sigma_y\gamma_{\sigma^{-1}_x(y)}(x)),$$
for all $x,y\in X$.
For general solutions one cannot define such a
derived solution. But in \cite{JKV} one   defines the
derived monoids of $(X,r)$ as
$$A(X,r)=\langle X\mid x+\sigma_x(y)=\sigma_x(y)+\sigma_{\sigma_x(y)}\gamma_y(x), \text{ for all }x,y\in X\rangle $$
and
$$A'(X,r)=\langle X\mid \gamma_y(x)\oplus y=\gamma_{\gamma_y(x)}\sigma_x(y)\oplus\gamma_y(x), \text{ for all } x,y\in X\rangle.$$
The zero element of $A(X,r)$ is denoted $0$ and the zero element of $A'(X,r)$ is denoted $0'$. We will say that $A(X,r)$ is the {\em left derived} structure
monoid of $(X,r)$ and $A'(X,r)$ is the {\em right derived} structure
monoid of $(X,r)$.

Note that $X\subseteq M(X,r)$, $X\subseteq A(X,r)$ and $X\subseteq
A'(X,r)$, because the defining relations of these three monoids are
homogeneous of degree $2$.

\begin{proposition}\label{lambda'rho'}
Let $(X,r)$ be a set-theoretic solution of the YBE, where
$r(x,y)=(\sigma_x(y), \gamma_y(x))$, for all $x,y \in X$. Then
there exists a unique monoid homomorphism $\lambda'\colon
M(X,r)\longrightarrow \End(A(X,r))$ such that,
$\lambda'(x)(y)=\sigma_x(y)$, for all $x,y\in X$ and  there exists a
unique anti-homomorphism $\rho'\colon  M(X,r)\longrightarrow
\End(A'(X,r))$ such that, $\rho'(x)(y)=\gamma_x(y)$, for all $x,y\in
X$. Furthermore, if $(X,r)$ is left (right) non-degenerate, then $\im(\lambda')\subseteq\Aut(A(X,r))$ ($\im(\rho')\subseteq\Aut(A'(X,r))$).\end{proposition}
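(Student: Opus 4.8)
The plan is to construct the endomorphisms in question on the free monoid, verify that they descend to the derived monoid, and then assemble them into the required (anti-)homomorphism. Write $F=\free{X}$ for the free monoid on $X$, so that $A(X,r)=F/{\equiv}$ where $\equiv$ is the congruence on $F$ generated by the pairs
$$R_{a,b}:=\bigl(\,a+\sigma_a(b)\,,\ \sigma_a(b)+\sigma_{\sigma_a(b)}\gamma_b(a)\,\bigr),\qquad a,b\in X .$$
Fix $z\in X$ and let $\widehat{\sigma_z}\colon F\to F$ be the monoid endomorphism with $\widehat{\sigma_z}(x)=\sigma_z(x)$ for $x\in X$. The crucial point is that $\widehat{\sigma_z}$ carries the relator $R_{a,b}$ to the relator $R_{\sigma_z(a),\,\sigma_{\gamma_a(z)}(b)}$: the left entries agree because \eqref{YBE1} gives $\sigma_z\sigma_a(b)=\sigma_{\sigma_z(a)}\sigma_{\gamma_a(z)}(b)$, and after pulling $\sigma_z$ through $\sigma_{\sigma_a(b)}$ by \eqref{YBE1} once more, the equality of the right entries reduces to exactly \eqref{YBE2} applied with the triple $(a,b,z)$. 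Hence $\widehat{\sigma_z}$ respects $\equiv$ and induces $\bar\sigma_z\in\End(A(X,r))$ with $\bar\sigma_z(x)=\sigma_z(x)$ on $X$.

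Next I would put $\lambda'(x):=\bar\sigma_z$ for $x=z\in X$ and extend multiplicatively. Since $\bar\sigma_x\circ\bar\sigma_y$ and $\bar\sigma_{\sigma_x(y)}\circ\bar\sigma_{\gamma_y(x)}$ are endomorphisms of $A(X,r)$, they coincide as soon as they agree on the generating set $X$, which is precisely \eqref{YBE1}; so the defining relations of $M(X,r)$ are respected and $\lambda'$ is a well-defined monoid homomorphism. Uniqueness is automatic: an endomorphism of $A(X,r)$ is determined by its restriction to $X\subseteq A(X,r)$, and $X$ generates $M(X,r)$, so both $\lambda'(x)$ and then $\lambda'$ are forced. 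The construction of $\rho'$ is entirely parallel, working with $A'(X,r)=F/{\equiv'}$ and the endomorphisms $\widehat{\gamma_z}$; now \eqref{YBE3} plays the role of \eqref{YBE1} and one still lands on \eqref{YBE2}, and $\rho'$ comes out an \emph{anti}-homomorphism because of the reversed order in \eqref{erho2}. (Alternatively one may apply the $\lambda'$-statement to the flipped solution $(X,r^{*})$, $r^{*}(x,y)=(\gamma_x(y),\sigma_y(x))$, which is again a solution, using $M(X,r)^{\mathrm{op}}\cong M(X,r^{*})$ and $A'(X,r)\cong A(X,r^{*})^{\mathrm{op}}$.)

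For the last assertion, assume $(X,r)$ is left non-degenerate, so every $\sigma_x\colon X\to X$ is bijective. Because $\lambda'$ is a monoid homomorphism and the units of $\End(A(X,r))$ form the subgroup $\Aut(A(X,r))$, it suffices to prove $\bar\sigma_x\in\Aut(A(X,r))$ for each $x\in X$. Surjectivity is clear, as $\bar\sigma_x$ is an endomorphism whose image contains $\sigma_x(X)=X$. For the inverse I would rerun the computation of the first paragraph with $\widehat{\sigma_z^{-1}}$ in place of $\widehat{\sigma_z}$: using \eqref{YBE1}, \eqref{YBE2} and the invertibility of all the maps $\sigma_{(-)}$, one checks that $\widehat{\sigma_z^{-1}}$ sends $R_{a,b}$ to $R_{\sigma_z^{-1}(a),\,\sigma_{\gamma_{\sigma_z^{-1}(a)}(z)}^{-1}(b)}$, so it descends to $\nu_z\in\End(A(X,r))$ with $\nu_z(x)=\sigma_z^{-1}(x)$ on $X$. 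Then $\nu_z\circ\bar\sigma_z$ and $\bar\sigma_z\circ\nu_z$ restrict to the identity on $X$, hence both equal $\id_{A(X,r)}$, so $\bar\sigma_z\in\Aut(A(X,r))$. The right non-degenerate case is identical (or follows via $(X,r^{*})$).

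The main obstacle is exactly the congruence check for $\widehat{\sigma_z}$, and its variant for $\widehat{\sigma_z^{-1}}$; everything else is formal book-keeping with generators. The point to get right there is the observation that the $\sigma_z$-image of a defining relator is again a defining relator, with its second argument twisted from $b$ to $\sigma_{\gamma_a(z)}(b)$ (respectively to $\sigma_{\gamma_{\sigma_z^{-1}(a)}(z)}^{-1}(b)$), after which only \eqref{YBE1} and \eqref{YBE2} are needed.
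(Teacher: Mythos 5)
Your proof is correct, and the computations at its core are the same three identities the paper uses --- (\ref{YBE1}), (\ref{YBE2}), (\ref{YBE3}), plus (\ref{useful}) and (\ref{neweq}) for the inverse --- but the organization is genuinely different and cleaner. The paper defines $\lambda'_{x_1\circ\cdots\circ x_m}$ by an explicit formula on representative words and verifies well-definedness in both variables at once, proving the analogue of your relator check (equations (\ref{eq:lambda'2}) and (\ref{eq:rho'2})) by induction on the length $m$ of the acting word. You instead factor the problem: first show that a \emph{single} generator $z$ induces a well-defined endomorphism of $A(X,r)$, by observing that $\widehat{\sigma_z}$ maps the relator $R_{a,b}$ exactly onto the relator $R_{\sigma_z(a),\,\sigma_{\gamma_a(z)}(b)}$ (first coordinates by (\ref{YBE1}), second coordinates by (\ref{YBE1}) plus (\ref{YBE2}) at $(a,b,z)$); then the compatibility with the relations of $M(X,r)$ reduces to comparing two endomorphisms on the generating set $X$, which is literally (\ref{YBE1}). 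This eliminates the induction on $m$ entirely and isolates the one nontrivial computation as a ``relator goes to relator'' statement; the same device with $\widehat{\sigma_z^{-1}}$ replaces the paper's ad hoc verification that $f_x$ is well defined. What the paper's version buys in exchange is an explicit closed formula for $\lambda'_a$ on arbitrary words, which it reuses later (e.g.\ in Propositions~\ref{cocycle} and~\ref{injective}). One small imprecision: the anti-homomorphism property of $\rho'$ is justified by the relation $\bar\gamma_{x_2}\circ\bar\gamma_{x_1}=\bar\gamma_{\gamma_{x_2}(x_1)}\circ\bar\gamma_{\sigma_{x_1}(x_2)}$, i.e.\ by (\ref{YBE3}) checked on generators, not by the formula (\ref{erho2}) for the action $\rho$ on $M$; this does not affect the argument.
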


\begin{proof}
We will write $\lambda'(a)=\lambda'_a$ and $\rho'(a)=\rho'_a$, for
all $a\in M(X,r)$.

Let $x_1,\dots, x_m,y_1,\dots ,y_n\in X$. We denote by $1,0,0'$ the
identity elements of the monoids $M(X,r)$, $A(X,r)$, $A'(X,r)$,
respectively. We define $\lambda'_1=\id_{A(X,r)}$,
$\rho'_1=\id_{A'(X,r)}$, $\lambda'_a(0)=0$, $\rho'_a(0')=0'$, for
all $a\in M(X,r)$, and
\begin{align*}
\lambda'_{ x_1\circ\cdots\circ x_m}(y_1+\dots +y_n)
&=\sigma_{x_1}\dots \sigma_{x_m}(y_1)+\dots +\sigma_{x_1}\dots
\sigma_{x_m}(y_n),
\end{align*}
and
\begin{align*}
\rho'_{ x_1\circ\cdots\circ x_m}(y_1\oplus\dots\oplus y_n)
&=\gamma_{x_m}\dots
\gamma_{x_1}(y_1)\oplus\dots\oplus\gamma_{x_m}\dots
\gamma_{x_1}(y_n),
\end{align*}

First we shall prove that $\lambda'$ and $\rho'$ are well-defined.
To do so it is enough to prove that the following equalities hold:
\begin{align}
\lambda'_{x_1 \circ x_2}(y_1 + \cdots + y_n) &= \lambda'_{\sigma_{x_1}(x_2) \circ \gamma_{x_2}(x_1)}(y_1 + \cdots + y_n), \label{eq:lambda'1} \\
\lambda'_{x_1 \circ \cdots \circ x_m}(y_1 + \sigma_{y_1}(y_2)) &= \lambda'_{x_1 \circ \cdots \circ x_m}(\sigma_{y_1}(y_2) + \sigma_{\sigma_{y_1}(y_2)}(\gamma_{y_2}(y_1))),\label{eq:lambda'2} \\
\rho'_{x_1 \circ x_2}(y_1 \oplus \cdots \oplus y_n) &= \rho'_{\sigma_{x_1}(x_2) \circ \gamma_{x_2}(x_1)}(y_1 \oplus \cdots \oplus y_n), \label{eq:rho'1} \\
\rho'_{x_1 \circ \cdots \circ x_m}(\gamma_{y_2}(y_1) \oplus y_2) &=
\rho'_{x_1 \circ \cdots \circ
x_m}(\gamma_{\gamma_{y_2}(y_1)}(\sigma_{y_1}(y_2)) \oplus
\gamma_{y_2}(y_1)). \label{eq:rho'2}
\end{align}

Using the relations (\ref{YBE1}) and (\ref{YBE3}), equations (\ref{eq:lambda'1})
and (\ref{eq:rho'1}) are easily checked:
\begin{align*}
\lambda'_{x_1 \circ x_2}(y_1 + \cdots + y_n)
&=\sigma_{x_1}\sigma_{x_2}(y_1) + \cdots +
\sigma_{x_1}\sigma_{x_2}(y_n)
\\ &= \sigma_{\sigma_{x_1}(x_2)}\sigma_{\gamma_{x_2}(x_1)}(y_1) + \cdots + \sigma_{\sigma_{x_1}(x_2)}\sigma_{\gamma_{x_2}(x_1)}(y_n)
\\ &= \lambda'_{\sigma_{x_1}(x_2) \circ \gamma_{x_2}(x_1)}(y_1 + \cdots + y_n), \\
\rho'_{x_1 \circ x_2}(y_1 \oplus \cdots \oplus y_n) &=
\gamma_{x_2}\gamma_{x_1}(y_1) \oplus \cdots \oplus
\gamma_{x_2}\gamma_{x_1}(y_n)
\\ &= \gamma_{\gamma_{x_2}(x_1)}\gamma_{\sigma_{x_1}(x_2)}(y_1) \oplus \cdots \oplus \gamma_{\gamma_{x_2}(x_1)}\gamma_{\sigma_{x_1}(x_2)}(y_n)
\\ &= \rho'_{\sigma_{x_1}(x_2) \circ \gamma_{x_2}(x_1)}(y_1 \oplus \cdots \oplus y_n).
\end{align*}

Using the relations (\ref{YBE1}), (\ref{YBE2}) and (\ref{YBE3}) we
shall prove equations (\ref{eq:lambda'2}) and (\ref{eq:rho'2})  by
induction on $m$. For $m=0$, (\ref{eq:lambda'2}) and
(\ref{eq:rho'2}) follows by the defining relations of $A(X,r)$ and
$A'(X,r)$. Suppose that $m>0$. Assume that $\lambda'_{x_1 \circ
\cdots \circ x_k}(y_1 + \sigma_{y_1}(y_2)) = \lambda'_{x_1 \circ
\cdots \circ x_k}(\sigma_{y_1}(y_2) +
\sigma_{\sigma_{y_1}(y_2)}(\gamma_{y_2}(y_1)))$ and $\rho'_{x_1
\circ \cdots \circ x_k}(\gamma_{y_2}(y_1) \oplus y_2) = \rho'_{x_1
\circ \cdots \circ
x_k}(\gamma_{\gamma_{y_2}(y_1)}(\sigma_{y_1}(y_2)) \oplus
\gamma_{y_2}(y_1))$, for $k<m$, then
\begin{align*}
& \lambda'_{x_1 \circ \cdots \circ x_m}(y_1 + \sigma_{y_1}(y_2))
\\ &= \sigma_{x_1}\cdots \sigma_{x_m}(y_1) + \sigma_{x_1}\cdots \sigma_{x_m}(\sigma_{y_1}(y_2))
\\ &= \lambda'_{x_1 \circ \cdots \circ x_{m-1}}(\sigma_{x_m}(y_1) + \sigma_{x_m}(\sigma_{y_1}(y_2)))
\\ &= \lambda'_{x_1 \circ \cdots \circ x_{m-1}}(\sigma_{x_m}(y_1) + \sigma_{\sigma_{x_m}(y_1)}(\sigma_{\gamma_{y_1}(x_m)}(y_2)))
\\ &= \lambda'_{x_1 \circ \cdots \circ x_{m-1}}(\sigma_{\sigma_{x_m}(y_1)}(\sigma_{\gamma_{y_1}(x_m)}(y_2))
+
\sigma_{\sigma_{\sigma_{x_m}(y_1)}(\sigma_{\gamma_{y_1}(x_m)}(y_2))}(\gamma_{\sigma_{\gamma_{y_1}(x_m)}(y_2)}(\sigma_{x_m}(y_1))))
\\ &= \lambda'_{x_1 \circ \cdots \circ x_{m-1}}(\sigma_{x_m}(\sigma_{y_1}(y_2)) + \sigma_{\sigma_{x_m}(\sigma_{y_1}(y_2))}(\gamma_{\sigma_{\gamma_{y_1}(x_m)}(y_2)}(\sigma_{x_m}(y_1))))
\\ &= \lambda'_{x_1 \circ \cdots \circ x_{m-1}}(\sigma_{x_m}(\sigma_{y_1}(y_2)) + \sigma_{\sigma_{x_m}(\sigma_{y_1}(y_2))}(\sigma_{\gamma_{\sigma_{y_1}(y_2)}(x_m)}(\gamma_{y_2}(y_1))))
\\ &= \lambda'_{x_1 \circ \cdots \circ x_{m-1}}(\sigma_{x_m}(\sigma_{y_1}(y_2)) + \sigma_{x_m}(\sigma_{\sigma_{y_1}(y_2)}(\gamma_{y_2}(y_1))))
\\ &= \sigma_{x_1} \cdots \sigma_{x_m}(\sigma_{y_1}(y_2)) + \sigma_{x_1} \cdots \sigma_{x_m}(\sigma_{\sigma_{y_1}(y_2)}(\gamma_{y_2}(y_1)))
\\ &= \lambda'_{x_1 \circ \cdots \circ x_m}(\sigma_{y_1}(y_2) + \sigma_{\sigma_{y_1}(y_2)}(\gamma_{y_2}(y_1))),
\end{align*}
and
\begin{align*}
& \rho'_{x_1\circ \cdots \circ x_m}(\gamma_{y_2}(y_1) \oplus y_2)
\\ &= \rho'_{x_2\circ \cdots \circ x_m}(\gamma_{x_1}(\gamma_{y_2}(y_1)) \oplus \gamma_{x_1}(y_2))
\\ &= \rho'_{x_2\circ \cdots \circ x_m}(\gamma_{\gamma_{x_1}(y_2)}(\gamma_{\sigma_{y_2}(x_1)}(y_1)) \oplus \gamma_{x_1}(y_2))
\\ &= \rho'_{x_2\circ \cdots \circ x_m}(\gamma_{\gamma_{\gamma_{x_1}(y_2)}(\gamma_{\sigma_{y_2}(x_1)}(y_1))}(\sigma_{\gamma_{\sigma_{y_2}(x_1)}(y_1)}(\gamma_{x_1}(y_2)))
\oplus \gamma_{\gamma_{x_1}(y_2)}(\gamma_{\sigma_{y_2}(x_1)}(y_1)))
\\ &= \rho'_{x_2\circ \cdots \circ x_m}(\gamma_{\gamma_{x_1}(\gamma_{y_2}(y_1))}(\sigma_{\gamma_{\sigma_{y_2}(x_1)}(y_1)}(\gamma_{x_1}(y_2))) \oplus \gamma_{x_1}(\gamma_{y_2}(y_1)))
\\ &= \rho'_{x_2\circ \cdots \circ x_m}(\gamma_{\gamma_{x_1}(\gamma_{y_2}(y_1))}(\gamma_{\sigma_{\gamma_{y_2}(y_1)}(x_1)}(\sigma_{y_1}(y_2))) \oplus \gamma_{x_1}(\gamma_{y_2}(y_1)))
\\ &= \rho'_{x_2\circ \cdots \circ x_m}(\gamma_{x_1}(\gamma_{\gamma_{y_2}(y_1)}(\sigma_{y_1}(y_2))) \oplus \gamma_{x_1}(\gamma_{y_2}(y_1)))
\\ &= \gamma_{x_m}\cdots \gamma_{x_1}(\gamma_{\gamma_{y_2}(y_1)}(\sigma_{y_1}(y_2))) \oplus \gamma_{x_m}\cdots\gamma_{x_1}(\gamma_{y_2}(y_1))
\\ &= \rho'_{x_1\circ \cdots \circ x_m}(\gamma_{\gamma_{y_2}(y_1)}(\sigma_{y_1}(y_2)) \oplus \gamma_{y_2}(y_1)).
\end{align*}
This proves that $\lambda'_a$ and $\rho'_a$ are well-defined and
clearly $\lambda'_a\in \End(A(X,r))$ and $\rho'_a\in \End(A'(X,r))$,
for all $a\in M(X,r)$. Thus $\lambda'$ and $\rho'$ are well-defined.
It is clear that $\lambda'$ is a monoid  homomorphism and that it is unique
with respect to the condition
$\lambda'_x(y)=\sigma_x(y)$, for all $x,y\in X$. It also is
clear that $\rho'$ is a monoid anti-homomorphism and that it is
unique for the condition $\rho'_x(y)=\gamma_x(y)$, for all $x,y\in X$.

Assume now that $(X,r)$ is left non-degenerate. Let
$x,y_1,\dots ,y_n\in X$.  We define $f_x\in \End(A(X,r))$ by
\begin{align*}
f_{x}(y_1+\dots +y_n) &=\sigma^{-1}_{x}(y_1)+\dots
+\sigma^{-1}_{x}(y_n).
\end{align*}
To see that $f_x$ is well-defined it is enough to prove that
$$f_{x}(y_1 + \sigma_{y_1}(y_2)) = f_{x}(\sigma_{y_1}(y_2) + \sigma_{\sigma_{y_1}(y_2)}(\gamma_{y_2}(y_1))).$$
Note that, from (\ref{YBE1}),
  \begin{eqnarray}
   \sigma_x^{-1} \sigma_{y_{1}} (y_2) &= &\sigma_{\sigma_x^{-1} (y_1)} \sigma^{-1}_{\gamma_{\sigma_x^{-1} (y_1)}(x)}(y_2) \label{useful}
   \end{eqnarray}
  and thus, also using (\ref{YBE2}), we get that
\begin{eqnarray}\label{neweq}
\lefteqn{\sigma_{\gamma_{\sigma^{-1}_{x}\sigma_{y_1}(y_2)}(x)}\gamma_{\sigma^{-1}_{\gamma_{\sigma^{-1}_{x}(y_1)}(x)}(y_2)}(\sigma^{-1}_{x}(y_1))}\\
&&=\sigma_{\gamma_{\sigma_{\sigma^{-1}_{x}(y_1)}\sigma^{-1}_{\gamma_{\sigma^{-1}_x(y_1)}(x)}(y_2)}(x)}
\gamma_{\sigma^{-1}_{\gamma_{\sigma^{-1}_{x}(y_1)}(x)}(y_2)}(\sigma^{-1}_{x}(y_1))\notag\\
&&=\gamma_{\sigma_{\gamma_{\sigma^{-1}_{x}(y_1)}(x)}(\sigma^{-1}_{\gamma_{\sigma^{-1}_x(y_1)}(x)}(y_2))}\sigma_{x}(\sigma^{-1}_{x}(y_1))\notag\\
&&=\gamma_{y_2}(y_1).\notag
\end{eqnarray}
We have that
\begin{align*}
& f_{x}(y_1 + \sigma_{y_1}(y_2))
\\ &= \sigma^{-1}_{x}(y_1) + \sigma^{-1}_{x}(\sigma_{y_1}(y_2))
\\ &\stackrel{(\ref{useful})}{=} \sigma^{-1}_{x}(y_1) + \sigma_{\sigma^{-1}_{x}(y_1)}(\sigma^{-1}_{\gamma_{\sigma^{-1}_{x}(y_1)}(x)}(y_2))
\\ &= \sigma_{\sigma^{-1}_{x}(y_1)}(\sigma^{-1}_{\gamma_{\sigma^{-1}_{x}(y_1)}(x)}(y_2))+
\sigma_{\sigma_{\sigma^{-1}_{x}(y_1)}(\sigma^{-1}_{\gamma_{\sigma^{-1}_{x}(y_1)}(x)}(y_2))}(\gamma_{\sigma^{-1}_{\gamma_{\sigma^{-1}_{x}(y_1)}(x)}(y_2)}(\sigma^{-1}_{x}(y_1)))
\\ &\stackrel{(\ref{useful})}{=}\sigma^{-1}_{x}(\sigma_{y_1}(y_2)) + \sigma_{\sigma^{-1}_{x}(\sigma_{y_1}(y_2))}(\gamma_{\sigma^{-1}_{\gamma_{\sigma^{-1}_{x}(y_1)}(x)}(y_2)}(\sigma^{-1}_{x}(y_1)))
\\ &\stackrel{
(\ref{neweq})}{=} \sigma^{-1}_{x}(\sigma_{y_1}(y_2))  +
\sigma_{\sigma^{-1}_{x}(\sigma_{y_1}(y_2))}(\sigma^{-1}_{\gamma_{\sigma^{-1}_{x}\sigma_{y_1}(y_2)}(x)}(\gamma_{y_2}(y_1)))
\\ &\stackrel{(\ref{useful})}{=} \sigma^{-1}_{x}(\sigma_{y_1}(y_2))  + \sigma^{-1}_{x}(\sigma_{\sigma_{y_1}(y_2)}(\gamma_{y_2}(y_1)))
\\ &= f_{x}(\sigma_{y_1}(y_2) +
\sigma_{\sigma_{y_1}(y_2)}(\gamma_{y_2}(y_1))),
\end{align*}
where the third equality follows from the defining relations in $A(X,r)$.
Hence $f_x$ is well-defined. Note that
$f_x\lambda'_x=\lambda'_xf_x=\id$. Thus $\lambda'_x\in\Aut(A(X,r))$,
for all $x\in X$. Therefore $\im(\lambda')\subseteq\Aut(A(X,r))$.

Similarly one can prove that if $(X,r)$ is right non-degenerate,
then $\im(\rho')\subseteq\Aut(A'(X,r))$.

\end{proof}

\begin{proposition}\label{cocycle}
Let $(X,r)$ be a set-theoretic solution of the YBE. Then
\begin{itemize}
\item[(i)] there is a unique $1$-cocycle $\pi:
M(X,r)\longrightarrow A(X,r)$ with respect to the left action
$\lambda'$ such that $\pi(x)=x$, for all $x\in X$.
\item[(ii)] there is a unique $1$-cocycle $\pi':
M(X,r)\longrightarrow A'(X,r)$ with respect to the right action $\rho'$
such that $\pi'(x)=x$, for all $x\in X$.
\end{itemize}
Furthermore, the mapping
 $$f: M(X,r) \rightarrow A(X,r) \rtimes \im(\lambda'):
  a\mapsto (\pi (a) , \lambda_a')$$
is a monoid homomorphism and the mapping
 $$f': M(X,r) \rightarrow A'(X,r)^{op} \rtimes \im(\rho'):
  a\mapsto (\pi'(a),\rho'_a) $$
is a monoid anti-homomorphism.
\end{proposition}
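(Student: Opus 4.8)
The plan is to construct $\pi$ by an explicit formula on words, to check that this formula respects the defining relations of $M(X,r)$, and then to read off the $1$-cocycle property; the map $\pi'$ is handled symmetrically, and the assertions about $f$ and $f'$ then follow formally. Recall that a $1$-cocycle $\pi\colon M(X,r)\to A(X,r)$ with respect to $\lambda'$ is a map with $\pi(1)=0$ and $\pi(a\circ b)=\pi(a)+\lambda'_{a}(\pi(b))$ for all $a,b\in M(X,r)$. Since $M(X,r)$ is generated by $X$, such a map satisfying $\pi(x)=x$ for $x\in X$ is completely determined: its value on a word $x_1\circ\cdots\circ x_m$ must be $x_1+\lambda'_{x_1}(x_2)+\lambda'_{x_1\circ x_2}(x_3)+\cdots+\lambda'_{x_1\circ\cdots\circ x_{m-1}}(x_m)$, which by the formula for $\lambda'$ in Proposition~\ref{lambda'rho'} equals
$$\pi(x_1\circ\cdots\circ x_m)=x_1+\sigma_{x_1}(x_2)+\sigma_{x_1}\sigma_{x_2}(x_3)+\cdots+\sigma_{x_1}\cdots\sigma_{x_{m-1}}(x_m)\in A(X,r).$$
This already gives uniqueness; the content is to show that the right-hand side descends to a well-defined map on $M(X,r)$, that is, is invariant under applying a defining relation $x\circ y=\sigma_x(y)\circ\gamma_y(x)$ to a subword. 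Thus one must verify $\pi(u\circ x\circ y\circ v)=\pi(u\circ\sigma_x(y)\circ\gamma_y(x)\circ v)$ for all words $u=x_1\circ\cdots\circ x_k$ and $v$ and all $x,y\in X$.

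For this one compares the two displayed sums summand by summand. The summands coming from $u$ are literally unchanged. The two summands coming from the rewritten pair are $\lambda'_{u}(x+\sigma_x(y))$ on the one side and $\lambda'_{u}\big(\sigma_x(y)+\sigma_{\sigma_x(y)}(\gamma_y(x))\big)$ on the other (after pulling out the common prefix operator $\sigma_{x_1}\cdots\sigma_{x_k}$, which as an endomorphism of $A(X,r)$ is $\lambda'_{u}$); these agree by the very defining relation of $A(X,r)$. Finally, each summand coming from $v$ agrees because the prefix operators $\sigma_{x_1}\cdots\sigma_{x_k}\sigma_x\sigma_y$ and $\sigma_{x_1}\cdots\sigma_{x_k}\sigma_{\sigma_x(y)}\sigma_{\gamma_y(x)}$ coincide by~(\ref{YBE1}) (equivalently, because $\lambda'$ is already a well-defined homomorphism on $M(X,r)$ by Proposition~\ref{lambda'rho'} and $x\circ y=\sigma_x(y)\circ\gamma_y(x)$ in $M(X,r)$). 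Hence $\pi$ is well defined, and $\pi(1)=0$, $\pi(x)=x$ as well as the cocycle identity are then immediate from the formula; indeed the cocycle identity is exactly the telescoping relation $\pi(u\circ w)=\pi(u)+\lambda'_{u}(\pi(w))$, which already holds at the level of words.

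Part~(ii) is proved in the same way: one puts $\pi'(x_1\circ\cdots\circ x_m)=\gamma_{x_m}\cdots\gamma_{x_2}(x_1)\oplus\cdots\oplus\gamma_{x_m}(x_{m-1})\oplus x_m$, obtains uniqueness as before, and checks invariance under $x\circ y=\sigma_x(y)\circ\gamma_y(x)$ using the defining relations of $A'(X,r)$ for the rewritten pair and~(\ref{YBE3}) (equivalently, Proposition~\ref{lambda'rho'} for $\rho'$) for the remaining summands. Here the relevant cocycle identity reads $\pi'(a\circ b)=\rho'_{b}(\pi'(a))\oplus\pi'(b)$ for all $a,b\in M(X,r)$, which again holds on the level of words and hence on $M(X,r)$.

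For $f$: the product in the semidirect product $A(X,r)\rtimes\im(\lambda')$ is $(a,\phi)(b,\psi)=(a+\phi(b),\phi\circ\psi)$, so using the cocycle identity for $\pi$ and the fact that $\lambda'$ is a homomorphism (Proposition~\ref{lambda'rho'}) we get $f(a)f(b)=(\pi(a)+\lambda'_{a}(\pi(b)),\lambda'_{a}\circ\lambda'_{b})=(\pi(a\circ b),\lambda'_{a\circ b})=f(a\circ b)$, while $f(1)=(0,\id)$ is the identity; hence $f$ is a monoid homomorphism. Dually, writing the product of $A'(X,r)^{op}\rtimes\im(\rho')$ as $(a,\phi)(b,\psi)=(\phi(b)\oplus a,\phi\circ\psi)$ and using that $\rho'$ is an anti-homomorphism, $f'(b)f'(a)=(\rho'_{b}(\pi'(a))\oplus\pi'(b),\rho'_{b}\circ\rho'_{a})=(\pi'(a\circ b),\rho'_{a\circ b})=f'(a\circ b)$, so $f'$ is a monoid anti-homomorphism. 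The only slightly delicate point in the whole argument is the well-definedness of $\pi$ and $\pi'$; but as indicated this reduces cleanly to the defining relations of the derived monoids together with Proposition~\ref{lambda'rho'}, after which everything else is purely formal.
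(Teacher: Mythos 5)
Your proof is correct and takes essentially the same approach as the paper: both construct $\pi$ and $\pi'$ by forcing the cocycle condition on words and verify well-definedness by checking invariance under the defining relations of $M(X,r)$, using the defining relations of $A(X,r)$ and $A'(X,r)$ together with Proposition~\ref{lambda'rho'} (so that $\lambda'_u$, $\rho'_u$ are well-defined endomorphisms and the composites of the $\sigma$'s, resp.\ $\gamma$'s, are unchanged under rewriting by (\ref{YBE1}), resp.\ (\ref{YBE3})). The only difference is organizational: the paper uses the recursive definition, an induction on word length that reduces to a relation applied in the first (resp.\ last) position, and a second induction on $\deg(a)+\deg(b)$ for the cocycle identity, whereas your closed formula lets you check a relation applied at an arbitrary position directly and obtain the cocycle identity by telescoping.
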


\begin{proof}
We define for $x_1,\dots ,x_m \in X$,
\begin{align*}
\pi(1) &= 0,
\\ \pi(x_1) &=x_1,\quad\mbox{ and for }m>1,
\\ \pi(x_1\circ\cdots\circ x_m) &= x_1 + \lambda'_{x_1}(\pi(x_2\circ\cdots\circ x_m)),
\\ \pi'(1) &= 0',
\\ \pi'(x_1) &=x_1,\quad\mbox{ and for }m>1,
\\ \pi'(x_1\circ\cdots \circ x_m) &= \rho'_{x_m}(\pi'(x_1 \circ \cdots \circ x_{m-1})) \oplus x_m.
\end{align*}
We prove that $\pi(x_1\circ\cdots\circ x_m)$ and
$\pi'(x_1\circ\cdots\circ x_m)$ are well-defined by induction on
$m$. For $m=1$ it is clear. Suppose that $m>1$ and that
$\pi(x_1\circ\cdots\circ x_{m-1})$ and $\pi'(x_1\circ\cdots\circ
x_{m-1})$ are well-defined.

By the induction hypothesis, it is enough to show that
\begin{equation}\label{eq1}
x_1 + \lambda'_{x_1}(\pi(x_2\circ\cdots\circ x_m))=\sigma_{x_1}(x_2)
+ \lambda'_{\sigma_{x_1}(x_2)}(\pi(\gamma_{x_2}(x_1)\circ
x_3\circ\cdots\circ x_m))
\end{equation}
and
\begin{equation}\label{eq2}
\rho'_{x_m}(\pi'(x_1\circ\cdots\circ x_{m-1}))\oplus x_m=
\rho'_{\gamma_{x_m}(x_{m-1})}(\pi'(x_1\circ\cdots\circ x_{m-2}\circ
\sigma_{x_{m-1}}(x_m)))\oplus \gamma_{x_m}(x_{m-1}).
\end{equation}
By (\ref{eq:lambda'1}) and (\ref{eq:rho'1})  we get that
\begin{eqnarray*}
\lefteqn{\sigma_{x_1}(x_2) +
\lambda'_{\sigma_{x_1}(x_2)}(\pi(\gamma_{x_2}(x_1)\circ
x_3\circ\cdots\circ x_m))}\\
&=&\sigma_{x_1}(x_2) +
\lambda'_{\sigma_{x_1}(x_2)}(\gamma_{x_2}(x_1)+\lambda'_{\gamma_{x_2}(x_1)}(\pi(
x_3\circ\cdots\circ x_m)))\\
&=&\sigma_{x_1}(x_2) +
\sigma_{\sigma_{x_1}(x_2)}(\gamma_{x_2}(x_1))+\lambda'_{\sigma_{x_1}(x_2)}(\lambda'_{\gamma_{x_2}(x_1)}(\pi(
x_3\circ\cdots\circ x_m)))\\
&=&x_1+
\sigma_{x_1}(x_2)+\lambda'_{\sigma_{x_1}(x_2)\circ\gamma_{x_2}(x_1)}(\pi(
x_3\circ\cdots\circ x_m))\\
&=&x_1+ \sigma_{x_1}(x_2)+\lambda'_{x_1\circ x_2}(\pi(
x_3\circ\cdots\circ x_m))\\
&=&x_1+ \sigma_{x_1}(x_2)+\lambda'_{x_1}(\lambda'_{x_2}(\pi(
x_3\circ\cdots\circ x_m)))\\
&=&x_1+ \lambda'_{x_1}(x_2+\lambda'_{x_2}(\pi(
x_3\circ\cdots\circ x_m)))\\
&=&x_1+ \lambda'_{x_1}(\pi(x_2\circ\cdots\circ x_m))\\
\end{eqnarray*}
and
\begin{eqnarray*}
\lefteqn{\rho'_{\gamma_{x_m}(x_{m-1})}(\pi'(x_1\circ\cdots\circ
x_{m-2}\circ
\sigma_{x_{m-1}}(x_m)))\oplus \gamma_{x_m}(x_{m-1})}\\
&=&\rho'_{\gamma_{x_m}(x_{m-1})}(\rho'_{\sigma_{x_{m-1}}(x_m)}(\pi'(x_1\circ\cdots\circ
x_{m-2}))\oplus \sigma_{x_{m-1}}(x_m))\oplus \gamma_{x_m}(x_{m-1})\\
&=&\rho'_{\gamma_{x_m}(x_{m-1})}(\rho'_{\sigma_{x_{m-1}}(x_m)}(\pi'(x_1\circ\cdots\circ
x_{m-2})))\oplus \gamma_{\gamma_{x_{m}}(x_{m-1})}(\sigma_{x_{m-1}}(x_m))\oplus \gamma_{x_m}(x_{m-1})\\
&=&\rho'_{\sigma_{x_{m-1}}(x_m)\circ\gamma_{x_m}(x_{m-1})}(\pi'(x_1\circ\cdots\circ
x_{m-2}))\oplus \gamma_{x_m}(x_{m-1})\oplus x_m\\
&=&\rho'_{x_{m-1}\circ x_m}(\pi'(x_1\circ\cdots\circ
x_{m-2}))\oplus \gamma_{x_m}(x_{m-1})\oplus x_m\\
&=&\rho'_{x_m}(\rho'_{x_{m-1}}(\pi'(x_1\circ\cdots\circ
x_{m-2})))\oplus \gamma_{x_m}(x_{m-1})\oplus x_m\\
&=&\rho'_{x_m}(\rho'_{x_{m-1}}(\pi'(x_1\circ\cdots\circ
x_{m-2}))\oplus x_{m-1})\oplus x_m\\
&=&\rho'_{x_m}(\pi'(x_1\circ\cdots\circ
x_{m-1}))\oplus x_m.\\
\end{eqnarray*}
Thus, indeed, $\pi$ and $\pi'$ are well-defined.

For all $a,b\in M(X,r)$, we shall prove by induction on $\deg(a)+\deg(b)$ that
\begin{equation}\label{eq3}\pi(a\circ
b)=\pi(a)+\lambda'_a(\pi(b))\end{equation} and
\begin{equation}\label{eq4}
\pi'(a\circ b)=\rho'_b(\pi'(a))\oplus \pi'(b).\end{equation}
If
$\deg(a)=\deg(b)=1$, then (\ref{eq3}) and (\ref{eq4}) follow by
definition. Hence, we may suppose that  $\deg(a)+\deg(b)>2$ and that $\pi(a'\circ
b')=\pi(a')+\lambda'_{a'}(\pi(b'))$ and $\pi'(a'\circ
b')=\rho'_{b'}(\pi'(a'))\oplus \pi'(b')$, for all $a',b'\in M(X,r)$
such that $\deg(a')+\deg(b')<\deg(a)+\deg(b)$.

Write  $a=x\circ a'$ and $b=b'\circ y$ for some $x,y\in X$ and $a',b'\in M(X,r)$.
 By the induction hypothesis we have
\begin{align*}
\pi(a\circ b)&= \pi(x\circ a'\circ b)\\
&= x+\lambda'_x(\pi(a'\circ b))\\
&= x+\lambda'_x(\pi(a')+\lambda'_{a'}(\pi( b)))\\
&= x+\lambda'_x(\pi(a'))+\lambda'_x(\lambda'_{a'}(\pi( b)))\\
&= \pi(x\circ a')+\lambda'_{x\circ a'}(\pi( b))\\
&= \pi(a)+\lambda'_{a}(\pi( b))\\
\end{align*}
and
\begin{align*}
\pi'(a\circ b)&= \pi'(a\circ b'\circ y)\\
&= \rho'_y(\pi'(a\circ b'))\oplus y\\
&= \rho'_y(\rho'_{b'}(\pi'(a))\oplus\pi'(b'))\oplus y\\
&= \rho'_y(\rho'_{b'}(\pi'(a)))\oplus\rho'_y(\pi'(b'))\oplus y\\
&= \rho'_{b'\circ y}(\pi'(a))\oplus\pi'(b'\circ y)\\
&= \rho'_{b}(\pi'(a))\oplus\pi'(b).\\
\end{align*}
Thus (\ref{eq3}) and (\ref{eq4}) follow by induction. It is clear
that $\pi$ and $\pi'$ are the unique $1$-cocycles satisfying the
hypothesis. Therefore the result follows.
\end{proof}

A natural question is the following.
\begin{question}\label{question}
When are  the 1-cocycles $\pi$ and $\pi'$ bijective?
\end{question}

In general, these 1-cocycles are not bijective.  We provide two
examples. The first one is an example where $\pi$ is injective but
not surjective, and the second one where $\pi$ and $\pi'$ are
neither injective nor surjective.

\begin{example}\label{Ex1}{\rm
    Let $(X,r)$ be a set-theoretic solution of the YBE, where $X$ is set of cardinality greater than $1$ and $r:X \times X \rightarrow X \times X$
is a map defined by $r(x,y) = (x,x)$, for all $x,y \in X$. The
associated monoids are
    \begin{align*}
    & M(X,r) = \langle X \mid x \circ y = x \circ x , \text{ for all } x,y \in X \rangle, \\
    & A(X,r) = \langle X \mid x + x = x + x , \text{ for all } x,y \in X \rangle, \\
    & A'(X,r) = \langle X \mid x \oplus y = x \oplus x , \text{ for all } x,y \in X \rangle.
    \end{align*}
The 1-cocycle $\pi'$ is bijective, but it is clear that the
1-cocycle $\pi$ is not. The latter is not surjective, for example
the element $x + y$, where $x \neq y \in X$ is not in the image of
$\pi$. Note that $\pi$ is still injective. Similarly, $(X,r)$ with
$r: X \times X \rightarrow X \times X$ defined by $r(x,y)=(y,y)$ is
an example of a set-theoretic solution of the YBE where $\pi'$ is
injective but not surjective.}
\end{example}

\begin{example} {\rm
    Let $S = \{0,1,2\}$ and define the skew lattice $(S, \land, \lor)$ by
    $$
    \begin{array}{r|rrr}
    \land & 0 & 1 & 2 \\
    \hline
    0 & 0 & 0 & 0 \\
    1 & 0 & 1 & 2 \\
    2 & 0 & 1 & 2
    \end{array} \hspace{.5cm}
    \begin{array}{r|rrr}
    \lor & 0 & 1 & 2\\
    \hline
    0 & 0 & 1 & 2 \\
    1 & 1 & 1 & 1 \\
    2 & 2 & 2 & 2
    \end{array}
    $$
    The skew lattice $(S,\land, \lor)$ is an example of a distributive and cancellative skew lattice that is not a co-strongly
distributive skew lattice, see (\cite[Example~4.4]{CVV}). By
\cite[Theorem~5.7]{CVV}, $(S,\land,\lor)$ is a left distributive
solution, i.e. $(S,r)$ is a set-theoretic solution of the YBE, where
$r:S \times S \rightarrow S \times S$ is defined by $r(x,y) = (x
\land y, y\lor x)$ for all $x,y \in S$. The associated monoids are
    \begin{align*}
    & M(X,r)= \langle 0,1,2 \mid 1 \circ 0 = 0 \circ 1, 2 \circ 0 = 0 \circ 2, 1\circ 2=2\circ 2, 2 \circ 1 = 1 \circ 1 \rangle, \\
    & A(X,r)= \langle 0,1,2 \mid 1 + 0 = 0 + 0, 2 + 0 = 0 +0, 1+2=2+2, 2+1=1+1 \rangle, \\
    & A'(X,r)= \langle 0,1,2 \mid 1 \oplus 0 = 1 \oplus 1, 2 \oplus 0 = 2 \oplus 2 \rangle.
    \end{align*}
    Both $\pi$ and $\pi'$ are not injective, as $\pi(1 \circ 0) = 1 + 0 = 0 +0 = \pi(0 \circ 0)$ and
$\pi'(1 \circ 0) = 1 \oplus 0 = 1 \oplus 1 = \pi'(1 \circ 1)$, but
$1 \circ 0 \neq 0 \circ 0$ and $1 \circ 0 \neq 1 \circ 1$ in
$M(X,r)$. Both $\pi$ and $\pi'$ are not surjective as $0+1$ (resp.
$0 \oplus 1$) is not in the image of $\pi$ (resp. $\pi'$).}
\end{example}

\begin{proposition}\label{surjective}
Let $(X,r)$ be a set-theoretic solution of the YBE. Write
$r(x,y)=(\sigma_x(y),\gamma_y(x))$. Let  $\pi\colon
M(X,r)\rightarrow A(X,r)$ and $\pi'\colon M(X,r)\rightarrow A'(X,r)$
be the $1$-cocycles of Proposition~\ref{cocycle}. Then
\begin{itemize}
\item[(i)]$\pi$ is surjective if and only if $\sigma_x$ is surjective for
all $x\in X$,
\item[(ii)] $\pi'$ is surjective if and only if $\gamma_x$ is surjective for
all $x\in X$.
\end{itemize}
\end{proposition}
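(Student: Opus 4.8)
The plan is to prove (i) in detail; (ii) then follows by the symmetric argument, replacing $A(X,r)$, $\lambda'$ and the maps $\sigma_x$ by $A'(X,r)$, $\rho'$ and the maps $\gamma_x$. I will use three facts throughout. First, since the defining relations of $A(X,r)$ are homogeneous of degree $2$, the monoid $A(X,r)$ is graded by word length, $A(X,r)=\bigsqcup_{n\ge 0}A_n$ (and similarly $M(X,r)$), and the $1$-cocycle $\pi$ preserves length. Second, by Proposition~\ref{cocycle}, $\pi(x_1\circ x_2)=x_1+\sigma_{x_1}(x_2)$, so $\im(\pi)\cap A_2$ is exactly the set of classes of the words $x_1+\sigma_{x_1}(x_2)$ with $x_1,x_2\in X$. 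Third, by Proposition~\ref{lambda'rho'}, $\lambda'_x\in\End(A(X,r))$ acts as $\sigma_x$ on the generating set $X$.

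For the implication ``all $\sigma_x$ surjective $\Rightarrow$ $\pi$ surjective'', I first note that surjectivity of $\sigma_x$ forces $\lambda'_x$ to be surjective, since its image is a submonoid containing $\lambda'_x(X)=\sigma_x(X)=X$, hence all of $A(X,r)$; as $\lambda'_x$ also preserves length it restricts to a surjection on each $A_n$. Then I would show $A_n\subseteq\im(\pi)$ by induction on $n$. The cases $n=0$ and $n=1$ are clear ($0=\pi(1)$ and $y=\pi(y)$ for $y\in X$). For $n\ge 2$, write $w\in A_n$ as $w=y_1+w'$ with $y_1\in X$ and $w'\in A_{n-1}$ (take a representative and split off the first letter); pick $w''\in A_{n-1}$ with $\lambda'_{y_1}(w'')=w'$, use the induction hypothesis to write $w''=\pi(b)$ for some $b\in M(X,r)$, and conclude from the cocycle identity $\pi(a\circ b)=\pi(a)+\lambda'_a(\pi(b))$ that $\pi(y_1\circ b)=y_1+\lambda'_{y_1}(\pi(b))=y_1+w'=w$.

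The converse is where the genuine content lies, and I would prove its contrapositive. Assume $\sigma_{x_0}$ is not surjective and choose $y_0\in X\setminus\sigma_{x_0}(X)$; the claim is that the length-$2$ word $x_0+y_0$ represents an element of $A(X,r)$ not lying in $\im(\pi)$. The key observation is that in every defining relation $x+\sigma_x(y)=\sigma_x(y)+\sigma_{\sigma_x(y)}(\gamma_y(x))$ of $A(X,r)$, both sides, read as ordered pairs in $X\times X$, belong to the set $P=\{(u,\sigma_u(w)):u,w\in X\}$: the left-hand pair is $(x,\sigma_x(y))$, and in the right-hand pair $(\sigma_x(y),\sigma_{\sigma_x(y)}(\gamma_y(x)))$ the second entry lies in $\sigma_{\sigma_x(y)}(X)$. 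Since these relations are homogeneous of degree $2$, the only way to rewrite a length-$2$ word is to replace it in its entirety, and this requires the corresponding pair to lie in $P$. As $(x_0,y_0)\notin P$, the congruence class of $x_0+y_0$ in $A(X,r)$ is the singleton $\{x_0+y_0\}$. On the other hand every representative of an element of $\im(\pi)\cap A_2$ has the form $x_1+\sigma_{x_1}(x_2)$ and hence lies in $P$; therefore the class of $x_0+y_0$ is not in $\im(\pi)$, so $\pi$ is not surjective.

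I expect the step requiring the essential idea to be this last one: spotting that the rigidity of $A(X,r)$ in degree $2$ — both sides of every defining relation having second coordinate in the image of the appropriate $\sigma$ — isolates the ``bad'' pairs $(x_0,y_0)$ with $y_0\notin\sigma_{x_0}(X)$ as one-element congruence classes. Everything else is a routine length induction together with the explicit formulas of Propositions~\ref{lambda'rho'} and \ref{cocycle}. For (ii) one argues identically, using the defining relations $\gamma_y(x)\oplus y=\gamma_{\gamma_y(x)}(\sigma_x(y))\oplus\gamma_y(x)$ of $A'(X,r)$ and the formula $\pi'(x_1\circ x_2)=\gamma_{x_2}(x_1)\oplus x_2$: the two sides of each such relation, read as ordered pairs, both have \emph{first} coordinate in $\gamma_{(\text{second coordinate})}(X)$, so a pair $(u_0,v_0)$ with $u_0\notin\gamma_{v_0}(X)$ gives a singleton class outside $\im(\pi')$.
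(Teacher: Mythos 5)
Your proof is correct and follows essentially the same route as the paper's: the same induction on length using surjectivity of $\lambda'_x$ for the forward implication, and the same degree-$2$ analysis of the defining relations of $A(X,r)$ for the converse. The only difference is cosmetic — you argue the converse contrapositively and spell out explicitly the closure of the set $P=\{(u,\sigma_u(w)):u,w\in X\}$ under the rewriting relations, a point the paper compresses into ``by the defining relations of $A(X,r)$, this equality implies\dots''.
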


\begin{proof}
Suppose that $\sigma_x$ is surjective for all $x\in X$.  First, we
claim that $\sigma_x$ being surjective implies that $\lambda_x'$ is
surjective. Take $n$ an arbitrary positive integer. Let $x_1, \dots
, x_n \in X$ such that $x_1 + \cdots + x_n \in A(X,r)$. As
$\sigma_x$ is surjective, there exist $y_1, \dots , y_n \in X$ such
that $\sigma_x(y_i)=x_i$, for all $i \in \{1, \dots, n\}$. Then,
$\lambda_x'(y_1 + \cdots + y_n) = \sigma_x(y_1) + \cdots +
\sigma_x(y_n) = x_1 + \cdots + x_n$, which proves that $\lambda_x'$
is surjective.

Next, we prove that $\pi$ is surjective by induction on the length of
the elements in $A(X,r)$. As $\pi(x)=x$ for all $x \in X$, $\pi$ is
surjective on elements of length 1.  Assume now that for a fixed
positive integer $n$ and for any $x_1,\dots ,x_n \in X$, there exist
$y_1,\dots ,y_n \in X$ such that $\pi(y_1 \circ \cdots \circ y_n) =
x_1 + \cdots + x_n$. Take $x_1, \dots , x_{n+1} \in X$. Since
$\lambda_{x_1}'$ is surjective, there exists $z_2,\dots , z_{n+1}
\in X$ such that $\lambda_{x_1}'(z_2 + \cdots + z_{n+1}) = x_2 +
\cdots + x_{n+1}$. Using the induction hypotheses, there exists
$y_2,\dots , y_{n+1} \in X$ such that $\pi(y_2 \circ \cdots \circ
y_{n+1}) = z_2 + \cdots + z_{n+1}$. Thus, we obtain
    \begin{align*}
    x_1+ \cdots + x_{n+1}
    & = x_1 + \lambda_{x_1}'(z_2 + \cdots + z_{n+1})
    \\& = x_1 + \lambda_{x_1}'(\pi(y_2 \circ \cdots \circ y_{n+1}))
    \\& = \pi(x_1 \circ y_2 \circ \cdots \circ y_{n+1}),
    \end{align*}
    and $\pi$ is surjective.

Suppose now that $\pi$ is surjective. Let $x,y\in X$  and consider $x+y \in A(X,r)$. Since $\pi$ is
surjective (and it preserves  the degree), there exist $z,t\in X$
such that $\pi(z\circ t)=x+y$. Thus $z+\sigma_z(t)=x+y$  in $A(X,r)$. By the
defining relations of $A(X,r)$, this equality implies that there
exists $y'\in X$ such that $\sigma_x(y')=y$. Therefore $\sigma_x$ is
surjective for all $x\in X$.

The proof for $\pi'$ is similar.
\end{proof}

\begin{proposition}\label{injective}
Let $(X,r)$ be a set-theoretic solution of the YBE.
Write $r(x,y)=(\sigma_x(y),\gamma_y(x))$. Let $\pi\colon M(X,r)\rightarrow
A(X,r)$ and $\pi'\colon M(X,r)\rightarrow A'(X,r)$ be the
$1$-cocycles of Proposition~\ref{cocycle}.
\begin{itemize}
\item[(i)] If $\sigma_x$ is injective for
all $x\in X$, then $\pi$ is injective.
\item[(ii)] If $\gamma_x$ is injective for
all $x\in X$, then $\pi'$ is injective.
\end{itemize}
\end{proposition}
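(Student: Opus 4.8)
Since the defining relations of $M(X,r)$ and of $A(X,r)$ are homogeneous of degree $2$, both monoids are graded by word length and $\pi$ preserves this grading; so it is enough to prove that $\pi$ is injective on the set $M_n$ of elements of length $n$, for every $n$. Both $M_n$ and $A_n$ are quotients of $X^n$, by the congruences $\equiv_M$ and $\equiv_A$ generated by replacing an adjacent pair of coordinates according to a defining relation, and — unwinding the recursion of Proposition~\ref{cocycle} and using $\lambda'_{x_1\circ\cdots\circ x_k}(y)=\sigma_{x_1}\cdots\sigma_{x_k}(y)$ for $y\in X$ — the restriction of $\pi$ to $M_n$ is the map induced by $\hat\pi\colon X^n\to X^n$, $\hat\pi(x_1,\dots,x_n)=(x_1,\,\sigma_{x_1}(x_2),\,\sigma_{x_1}\sigma_{x_2}(x_3),\,\dots,\,\sigma_{x_1}\cdots\sigma_{x_{n-1}}(x_n))$. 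So the statement reduces to: $\hat\pi(w)\equiv_A\hat\pi(w')$ implies $w\equiv_M w'$.

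I would first record two facts. (a) The map $\hat\pi$ is injective: from $\hat\pi(x_1,\dots,x_n)$ one reads off $x_1$, and then successively $x_k$ as the unique preimage of the $k$-th coordinate under $\sigma_{x_1}\cdots\sigma_{x_{k-1}}$, which is a composition of injective maps. (b) (Forward lift.) Suppose $v$ is obtained from $\hat\pi(w)$ by applying a defining relation of $A$ left-to-right at coordinates $(i,i+1)$, and let $w'$ be obtained from $w$ by applying the defining relation of $M$ left-to-right at coordinates $(i,i+1)$; then $\hat\pi(w')=v$. By~(\ref{YBE1}) one has $\sigma_{\sigma_{w_i}(w_{i+1})}\sigma_{\gamma_{w_{i+1}}(w_i)}=\sigma_{w_i}\sigma_{w_{i+1}}$, so the coordinates of $\hat\pi(\cdot)$ away from $\{i,i+1\}$ are unaffected; coordinate $i$ matches by the definition of $\hat\pi$; and the equality at coordinate $i+1$ follows, after using injectivity of $\sigma_{\hat\pi(w)_i}$ to pin down the element produced by the $A$-relation, from the identities~(\ref{YBE1})--(\ref{YBE3}), much as in the proof of Proposition~\ref{lambda'rho'}. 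Combining (b) (applied to $w$ or to $w'$, according to the direction of the move) with (a) shows: whenever $u,u'\in\hat\pi(X^n)$ are joined by a single $\equiv_A$-move, $\hat\pi^{-1}(u)$ and $\hat\pi^{-1}(u')$ are joined by a single $\equiv_M$-move.

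What remains is to climb an $\equiv_A$-derivation $\hat\pi(w)=z_0,z_1,\dots,z_k=\hat\pi(w')$, and the obstacle — which I expect to be the crux of the proof — is that the intermediate words $z_j$ need not lie in $\hat\pi(X^n)$: the maps $\sigma_x$ are injective but generally not surjective, so $\hat\pi$ is typically not onto (and $A(X,r)$ is strictly larger than $\pi(M(X,r))$, cf.\ Example~\ref{Ex1}), which is why one cannot simply lift the derivation step by step. The plan is to show that any $\equiv_A$-derivation between two words of $\hat\pi(X^n)$ can be rerouted so as to remain inside $\hat\pi(X^n)$ — equivalently, that the restriction of $\equiv_A$ to $\hat\pi(X^n)$ is already generated by $\equiv_A$-moves through $\hat\pi(X^n)$ — using the identities~(\ref{YBE1})--(\ref{YBE3}) and the associated solution $(M,r_M)$ of Theorem~\ref{main}; once the derivation stays in $\hat\pi(X^n)$, fact (b) lifts it to an $\equiv_M$-derivation, giving $w\equiv_M w'$. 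The statement for $\pi'$ is entirely symmetric, interchanging the roles of $\sigma$ and $\gamma$, of $\lambda'$ and $\rho'$, of left and right, and of $A$ and $A'$.
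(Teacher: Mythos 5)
Your setup is exactly the paper's: it also works degree by degree, writes $\pi(x_1\circ\cdots\circ x_n)=x_1+\sigma_{x_1}(x_2)+\cdots+\sigma_{x_1}\cdots\sigma_{x_{n-1}}(x_n)$, introduces the elementary relation $\sim$ on degree-$n$ words of $\FM(X)$ whose transitive closure is equality in $A(X,r)$, and then lifts a single $\sim$-move between $\hat\pi(w)$ and $\hat\pi(w')$ to a single defining relation of $M(X,r)$, using injectivity of the $\sigma_x$ precisely as in your facts (a) and (b). (A minor point: the coordinate-$(i+1)$ identification in your fact (b) is not done in the paper from (\ref{YBE1})--(\ref{YBE3}) alone but from their monoid-level consequences in Theorem~\ref{main}, i.e.\ the identities relating $\lambda_{a\circ b}$, $\lambda_{\lambda_a(b)}$, $\lambda_{\rho_b(a)}$ and $\rho$; the computation does go through, so this is only a matter of filling in details.)

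The genuine problem is your last paragraph: the rerouting of an arbitrary $\equiv_A$-derivation between two elements of $\hat\pi(X^n)$ so that it stays inside $\hat\pi(X^n)$ is announced as a plan but never carried out, and that is where all the content beyond the single-move case sits. The difficulty you flag is real: $\hat\pi(X^n)$ need not be closed under elementary moves. For example, for the solution $r(x,y)=(y+1,x+1)$ on $\N$ (every $\sigma_x$ injective, none surjective) the word $(0,1,2)=\hat\pi(0,0,0)$ is joined by one $\sim$-move at positions $2,3$ to $(0,0,1)$, which is not in $\hat\pi(\N^3)$; your fact (b) shows forward moves preserve $\hat\pi(X^n)$, but backward moves can leave it, so some argument is needed to replace a derivation by one whose consecutive terms all lie in $\hat\pi(X^n)$, or to control excursions outside it. For comparison, the paper does not supply such an argument either: after the chain characterization it simply says one ``may assume'' that $\hat\pi(x_1,\dots,x_n)$ and $\hat\pi(y_1,\dots,y_n)$ differ by a single move and then performs your fact (b) in full. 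So you have reproduced the skeleton of the published proof and correctly isolated the step it leaves implicit, but as written your proposal establishes only the single-move case and therefore does not yet prove the proposition.
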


\begin{proof}
We shall prove $(i)$. The proof of $(ii)$ is similar. Let $\FM(X)$
be the (multiplicative) free monoid on $X$. Suppose that $\sigma_x$
is injective for all $x\in X$. Since $\pi(x)=x$ for all $x\in X$,
the restriction of $\pi$ to elements of degree one in $M(X,r)$ is
injective. Let $n$ be an integer greater than $1$. Let $x_1,\dots,
x_n,y_1,\dots, y_n\in X$ be elements such that
$\pi(x_1\circ\cdots\circ x_n)=\pi(y_1\circ\cdots\circ y_n)$. Thus,
in $A(X,r)$, we have that
$$x_1+\sigma_{x_1}(x_2)+\cdots +\sigma_{x_1}\cdots\sigma_{x_{n-1}}(x_n)
=y_1+\sigma_{y_1}(y_2)+\cdots
+\sigma_{y_1}\cdots\sigma_{y_{n-1}}(y_n).$$ Let $w_1,w_2\in \FM(X)$
be two elements of degree $n$. Suppose that $w_1=z_1\cdots z_n$ and
$w_2=t_1\cdots t_n$, for some $z_i,t_i\in X$. We say that $w_1\sim
w_2$ if there exist $1\leq i\leq n-1$ and $z\in X$ such that
$z_j=t_j$, for all $j\in\{ 1,2,\dots ,n\}\setminus \{ i,i+1\}$ and
either $z_{i+1}=\sigma_{z_{i}}(z)=t_i$ and
$t_{i+1}=\sigma_{t_i}\gamma_{z}(z_i)$, or
$t_{i+1}=\sigma_{t_{i}}(z)=z_i$ and
$z_{i+1}=\sigma_{z_i}\gamma_{z}(t_i)$. Note that $z_1+\dots
+z_n=t_1+\cdots +t_n$ in $A(X,r)$ if and only if there exist
$w_1',\dots ,w_m'\in \FM(X)$ of degree $n$ such that
$$w_1=w_1'\sim w_2'\sim\cdots\sim w_m'=w_2.$$
Hence to prove that $x_1\circ\cdots\circ x_n=y_1\circ\cdots\circ
y_n$, we may assume that there exist $1\leq i\leq n-1$ and $z\in X$
such that
$\sigma_{x_1}\cdots\sigma_{x_{j-1}}(x_j)=\sigma_{y_1}\cdots\sigma_{y_{j-1}}(y_j)$,
for all $j\in\{ 1,2,\dots ,n\}\setminus \{ i,i+1\}$, and also
$\sigma_{x_1}\cdots\sigma_{x_i}(x_{i+1})=\sigma_{\sigma_{x_1}\cdots\sigma_{x_{i-1}}(x_{i})}(z)
=\sigma_{y_1}\cdots\sigma_{y_{i-1}}(y_{i})$ as well as
 $\sigma_{y_1}\cdots\sigma_{y_i}(y_{i+1})=\sigma_{\sigma_{y_1}\cdots\sigma_{y_{i-1}}(y_{i})}\gamma_{z}(\sigma_{x_1}\cdots\sigma_{x_{i-1}}(x_i))$.

Since
$\sigma_{x_1}\cdots\sigma_{x_{j-1}}(x_j)=\sigma_{y_1}\cdots\sigma_{y_{j-1}}(y_j)$,
for all $j\in\{ 1,2,\dots ,n\}\setminus \{ i,i+1\}$, and $\sigma_x$
is injective for all $x\in X$, we have that $x_j=y_j$, for all $j\in
\{ 1,\dots, i-1\}$.  Hence, since
$\sigma_{x_1}\cdots\sigma_{x_i}(x_{i+1})=\sigma_{y_1}\cdots\sigma_{y_{i-1}}(y_{i})$,
and  $\sigma_x$ is injective for all $x\in X$, we have that
$y_i=\sigma_{x_i}(x_{i+1})$. Now we have that
\begin{eqnarray*}
\sigma_{\sigma_{x_1}\cdots\sigma_{x_{i-1}}(x_{i})}(z)&=&\sigma_{x_1}\cdots\sigma_{x_i}(x_{i+1})\\
&=&\lambda_{x_{1}\circ\cdots\circ x_{i-1}}\lambda_{x_i}(x_{i+1})\\
&=&\lambda_{\lambda_{x_{1}\circ\cdots\circ
x_{i-1}}(x_i)}\lambda_{\rho_{x_i}(x_{1}\circ\cdots\circ
x_{i-1})}(x_{i+1})\\
&=&\sigma_{\sigma_{x_{1}}\cdots\sigma_{x_{i-1}}(x_i)}\lambda_{\rho_{x_i}(x_{1}\circ\cdots\circ
x_{i-1})}(x_{i+1}),
\end{eqnarray*}
where the third equality follows by Theorem~\ref{main}.

Hence, since $\sigma_x$ is injective for all $x\in X$, we have that
$$z=\lambda_{\rho_{x_i}(x_{1}\circ\cdots\circ
x_{i-1})}(x_{i+1}).$$ By Theorem~\ref{main},
\begin{eqnarray*}
\lefteqn{\sigma_{y_1}\cdots\sigma_{y_i}(y_{i+1})}\\
&=&\sigma_{\sigma_{y_1}\cdots\sigma_{y_{i-1}}(y_{i})}\gamma_{z}(\sigma_{x_1}\cdots\sigma_{x_{i-1}}(x_i))\\
&=&\sigma_{\sigma_{x_1}\cdots\sigma_{x_{i-1}}(\sigma_{x_i}(x_{i+1}))}\gamma_{z}(\sigma_{x_1}\cdots\sigma_{x_{i-1}}(x_i))\\
&=&\lambda_{\lambda_{x_1\circ\cdots\circ
x_{i-1}}(\lambda_{x_i}(x_{i+1}))}\rho_{\lambda_{\rho_{x_i}(x_{1}\circ\cdots\circ
x_{i-1})}(x_{i+1})}(\lambda_{x_1\circ\cdots\circ x_{i-1}}(x_i))\\
&=&\lambda_{\lambda_{x_1\circ\cdots\circ
x_{i-1}}(\lambda_{x_i}(x_{i+1}))}\lambda_{\rho_{\lambda_{x_i}(x_{i+1})}(x_{1}\circ\cdots\circ
x_{i-1})}(\rho_{x_{i+1}}(x_i))\\
&=&\lambda_{x_1\circ\cdots\circ
x_{i-1}}\lambda_{\lambda_{x_i}(x_{i+1})}(\rho_{x_{i+1}}(x_i))\\
&=&\lambda_{y_1\circ\cdots\circ
y_{i-1}}\lambda_{y_i}(\rho_{x_{i+1}}(x_i))\\
&=&\sigma_{y_1}\cdots\sigma_{y_{i-1}}\sigma_{y_i}(\gamma_{x_{i+1}}(x_i)).
\end{eqnarray*}
Since $\sigma_x$ is injective for all $x\in X$, we have that
$y_{i+1}=\gamma_{x_{i+1}}(x_i)$. Thus,
$$y_i\circ y_{i+1}=\sigma_{x_i}(x_{i+1})\circ \gamma_{x_{i+1}}(x_i)=x_i\circ x_{i+1}.$$
Since
$\sigma_{x_1}\cdots\sigma_{x_{j-1}}(x_j)=\sigma_{y_1}\cdots\sigma_{y_{j-1}}(y_j)$,
for all $j\in\{ 1,2,\dots ,n\}\setminus \{ i,i+1\}$, and $\sigma_x$
is injective for all $x\in X$, we have that $x_j=y_j$, for all $j\in
\{ i+2,\dots, n\}$. Hence $x_1\circ\cdots\circ
x_n=y_1\circ\cdots\circ y_n$, and therefore, $\pi$ is injective.
\end{proof}

\begin{remark} {\rm Note that in the set-theoretic solution of the
YBE of Example \ref{Ex1}, $\sigma_x(y)=x$ for all $x,y \in X$, so
$\sigma_x$ is not injective. But, $\pi$ is injective.
Similarly, $(X,r)$ with $r: X \times X \rightarrow X \times X$
defined by $r(x,y)=(y,y)$ is a set-theoretic solution of the YBE
where $\pi'$ is injective (see Example \ref{Ex1}) but
$\gamma_y(x)=y$ for all $x,y \in X$. So $\gamma_y$ is not
injective.}
\end{remark}

If $\pi$ (resp. $\pi'$) is injective, then it is clear that the map
$f$ (resp. $f'$), defined in Proposition~\ref{cocycle}, is an
embedding. The latter was proved in \cite{JKV} under the assumption
that $(X,r)$ is a left non-degenerate solution. In this case $\pi$
is bijective and $M(X,r)$ is a regular submonoid of the semidirect
product $A(X,r) \rtimes \gr(\sigma_x \mid x \in X)$.

The following result answers Question~\ref{question} for
finite solutions.

\begin{corollary}\cite{JKV} (Jespers, Kubat and Van Antwerpen)\label{bijectivecocycle}
Let $(X,r)$ be a set-theoretic solution of the YBE, $\lambda'$
(resp. $\rho'$) the left (resp. right) action as defined before,
$\pi$ (resp. $\pi'$) the unique 1-cocycle with respect to $\lambda'$
(resp. $\rho'$). Then, $\pi$ (resp. $\pi'$) is bijective if $(X,r)$
is left non-degenerate (resp. right non-degenerate). The converse
holds if $X$ is finite.
\end{corollary}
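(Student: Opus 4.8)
The plan is to read off the corollary directly from Propositions~\ref{surjective} and \ref{injective}, with the finiteness only entering to upgrade ``surjective'' to ``bijective'' for the maps $\sigma_x$ (resp. $\gamma_x$). For the forward implication, suppose $(X,r)$ is left non-degenerate, so that every $\sigma_x$ is a bijection of $X$; in particular every $\sigma_x$ is surjective, so Proposition~\ref{surjective}(i) gives that $\pi$ is surjective, and every $\sigma_x$ is injective, so Proposition~\ref{injective}(i) gives that $\pi$ is injective. Hence $\pi$ is bijective. The statement for $\pi'$ is entirely symmetric: if $(X,r)$ is right non-degenerate then every $\gamma_x$ is bijective, and Propositions~\ref{surjective}(ii) and \ref{injective}(ii) yield that $\pi'$ is surjective and injective, hence bijective.

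For the converse, assume $X$ is finite and $\pi$ is bijective. In particular $\pi$ is surjective, so by Proposition~\ref{surjective}(i) the map $\sigma_x\colon X\to X$ is surjective for every $x\in X$. Since $X$ is finite, a surjective self-map of $X$ is automatically a bijection, so each $\sigma_x$ is bijective; that is, $(X,r)$ is left non-degenerate. Likewise, if $\pi'$ is bijective then it is surjective, so Proposition~\ref{surjective}(ii) makes every $\gamma_x\colon X\to X$ surjective, hence bijective by finiteness, so $(X,r)$ is right non-degenerate. This completes the proof.

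I do not expect any real obstacle here: all the substantive work has already been carried out in Propositions~\ref{surjective} and \ref{injective}. The only point worth flagging is that Proposition~\ref{injective} is a one-way implication (injectivity of the $\sigma_x$ forces injectivity of $\pi$, not conversely), so for the converse direction one must argue through \emph{surjectivity} --- using that Proposition~\ref{surjective} is an equivalence --- rather than through injectivity, and then invoke finiteness of $X$ to recover bijectivity of each $\sigma_x$ (resp. $\gamma_x$).
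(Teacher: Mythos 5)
Your proof is correct and follows exactly the paper's own argument: both directions are read off from Propositions~\ref{surjective} and \ref{injective}, with finiteness of $X$ used only in the converse to upgrade surjectivity of each $\sigma_x$ (resp. $\gamma_x$) to bijectivity. Your closing remark about why the converse must go through Proposition~\ref{surjective} rather than Proposition~\ref{injective} is an accurate and worthwhile observation, implicit in the paper's proof.
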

\begin{proof}
Assume first that $(X,r)$ is a left non-degenerate set-theoretic
solution of the YBE. Then, by Propositions~\ref{surjective}
and~\ref{injective}, $\pi$ is bijective. Similarly, one can prove
that $(X,r)$ being a right non-degenerate solution implies that
$\pi'$ is bijective.

Assume now that $\pi: M(X,r) \rightarrow A(X,r)$ is bijective and $X$ is finite. By
Proposition~\ref{surjective}, $\sigma_x$ is surjective for all $x
\in X$. Since $X$ is finite, $\sigma_x$ is bijective for all $x \in
X$, that is $(X,r)$ is left non-degenerate.
\end{proof}

The next example shows the difficulty of Question~\ref{question} for
infinite solutions.

\begin{example}
Consider the set $\mathbb{N}$ of the non-negative integers. Let
$r\colon \mathbb{N}\times \mathbb{N}\longrightarrow\mathbb{N}\times
\mathbb{N}$ be the map defined by $r(x,y)=(\xi(y),\xi(x))$, for all
$x,y\in\mathbb{N}$, where $\xi(x)=\max\{ 0, x-1\}$, for all $x\in
\mathbb{N}$. Then $(\mathbb{N}, r)$ is a set-theoretic solution of
the YBE, such that the associated $1$-cocycles $\pi$ and $\pi'$ are
bijective, but, for every $x\in\mathbb{N}$ $\sigma_x=\gamma_x=\xi$
is not injective, because $\xi(0)= \xi(1)$.
\end{example}
\begin{proof}
It is easy to check that $(\mathbb{N},r)$ is a set-theoretic
solution of the YBE. Note that, for every $x\in\mathbb{N}$,
$\xi^x(x)=0$. Hence
$$M(\mathbb{N},r)=\langle\mathbb{N}\mid x\circ y=0\circ 0\rangle,$$
$$A(\mathbb{N},r)=\langle\mathbb{N}\mid x+ y=0+ 0\rangle$$
and
$$A'(\mathbb{N},r)=\langle\mathbb{N}\mid x\oplus y=0\oplus 0\rangle.$$
Therefore, for every integer $n>1$, the monoids $M(\mathbb{N},r)$,
$A(\mathbb{N},r)$ and $A'(\mathbb{N},r)$ have only one element of
degree $n$. Since $\pi$ and $\pi'$ preserve the degree and
$\pi(x)=x$ and $\pi'(x)=x$, for all $x\in \mathbb{N}$, we have that
$\pi$ and $\pi'$ are bijective. Thus the result follows.
\end{proof}

\section{Non-degenerate irretractable solutions}

In \cite[Theorem 2]{RumpAdv} (and independently in \cite[Corollary 2.3]{JO}), it is proven that any finite involutive left
non-degenerate set-theoretic solution of the YBE also is right
non-degenerate. In the infinite case, the latter is no longer true.
The following example from \cite{RumpAdv} shows this.

\begin{example}\rm \label{exref}
Let X be the set of the integers, and define
$r: X^2\rightarrow X^2$ by $$r(x,y)=(\lambda_x(y),\lambda^{-1}_{\lambda_x(y)}(x)),$$
where $\lambda_x(y)=y+\min(x,0)$, for all $x,y\in X$. Note that $\lambda_x$ is bijective
and $\lambda^{-1}_x(y)=y-\min(x,0)$. It is easy to check that $(X,r)$
is an involutive solution. Note that it is not right non-degenerate. In fact, if $a<0$, we have that
\begin{align*}
    \rho_a(b)=\lambda^{-1}_{\lambda_{b}(a)}(b)=b-\min(a+\min(b,0),0)=b-(a+b)=-a,
\end{align*}
for all $b<0$. Hence $\rho_a$ is not bijective if $a<0$.

\end{example}

It is unclear whether the above holds for aribtrary bijective
solutions. Hence the following question is pertinent.

\begin{question}
Is any finite bijective left non-degenerate set-theoretic solution
of the YBE right non-degenerate?
\end{question}

A natural question is the converse:

\begin{question}
Are non-degenerate solutions of the YBE always bijective?
\end{question}

We will give a positive answer to this question in case the solution $(X,r)$ is irretractable,
i.e.  $\sigma_x=\sigma_y$ implies  $x=y$, for all $x,y\in X$. Note that
Example~\ref{exref} is a retractable involutive solution. To our knowledge it is unknown
whether there exist infinite involutive irretractable solutions that are left but not
right non-degenerate.
Note that irretractability has been defined with respect to to the maps $\sigma_x$. One could equally well define retractabilty with respect to the maps $\gamma_x$. However, this makes no difference since any solution
$r(x,y)=(\sigma_x(y),\gamma_y(x))$ has a dual solution $r'(y,x)=(\gamma_y(x),\sigma_x(y))$. Clearly $r$ is (bijective) non-degenerate if and only if $r'$ is (bijective) non-degenerate.

To prove the result we will make use of  the following result of Rump  \cite[Proposition 1]{R}:
Let $X$ be a non-empty set and let $r:X\times X\rightarrow X\times X$ be a map, with $r(x,y)=(\sigma_x(y),\gamma_y(x))$, such that $\gamma_y :X\rightarrow X$ is bijective for all $y\in X$. Then $(X,r)$ is a solution of the YBE if and only if the following conditions hold for all $x,y,z \in X$:
\begin{enumerate}
\item[(R1)]  $ (x \cdot y)\cdot (x\cdot z)=(y:x)\cdot (y\cdot z)$,
\item[(R2)]  $ (x:y):(x:z)=(y\cdot x):(y:z)$,
\item[(R3)]  $ (x\cdot y):(x\cdot z)=(y:x)\cdot (y:z)$,
\end{enumerate}
where $x\cdot y=\gamma^{-1}_x(y)$,  $x:y=\sigma_{\gamma^{-1}_y(x)}(y)$.
Furthermore, $r$ is a bijective solution if the map $X\rightarrow X$ defined by $z\mapsto x:z$ is bijective.
The use of this result has been proposed by the referee to avoid the arboresque sub- and superscripts in the original proof.

We also will make use of a  lemma  that was proved by Lebed and Vendramin in \cite{LV} for
finite non-degenerate bijective solutions.

\begin{lemma}\label{h}
Let $(X,r)$  be a non-degenerate  set-theoretic solution of the
YBE. Let $h\colon X\rightarrow X$ be the map defined by
$h(x)=\sigma^{-1}_x(x)$, for all $x\in X$. If $(X,r)$ is
irretractable,
then $h$ is bijective and $h^{-1}(x)=\gamma^{-1}_x(x)$, for all
$x\in X$.
\end{lemma}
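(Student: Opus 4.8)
The plan is to work with the Rump operations $x\cdot y=\gamma_x^{-1}(y)$ and $x:y=\sigma_{\gamma_y^{-1}(x)}(y)$, and to show first that the map $x\mapsto x\cdot x=\gamma_x^{-1}(x)$ is the inverse of $h$, so that bijectivity of $h$ is equivalent to bijectivity of $x\mapsto\gamma_x^{-1}(x)$; then to prove that the latter map is both injective and surjective using irretractability together with the YBE relations. First I would record the basic identity $h(x)=\sigma_x^{-1}(x)$; note that $h$ is well-defined since $(X,r)$ is left non-degenerate, and $\gamma_x^{-1}(x)$ is well-defined since $(X,r)$ is right non-degenerate.

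Next I would prove injectivity of $h$. Suppose $h(x)=h(y)$, i.e.\ $\sigma_x^{-1}(x)=\sigma_y^{-1}(y)=:z$, so $x=\sigma_x(z)$ and $y=\sigma_y(z)$. I want to deduce $\sigma_x=\sigma_y$ and then invoke irretractability. The key is relation (\ref{YBE1}), $\sigma_a\sigma_b=\sigma_{\sigma_a(b)}\sigma_{\gamma_b(a)}$. Applying it with $b$ chosen so that $\sigma_a(b)=z$... actually the cleaner route is: for arbitrary $w\in X$, use (\ref{YBE1}) in the form $\sigma_x\sigma_z=\sigma_{\sigma_x(z)}\sigma_{\gamma_z(x)}=\sigma_x\sigma_{\gamma_z(x)}$ — wait, this gives $\sigma_x\sigma_z=\sigma_x\sigma_{\gamma_z(x)}$, which by left-cancellativity of the $\sigma$'s forces $\sigma_z=\sigma_{\gamma_z(x)}$, hence by irretractability $z=\gamma_z(x)$, i.e.\ $x=\gamma_z^{-1}(z)$. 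By the identical computation with $y$ in place of $x$ (using $y=\sigma_y(z)$) one gets $y=\gamma_z^{-1}(z)$ as well, so $x=y$. This simultaneously shows $h^{-1}(z)=\gamma_z^{-1}(z)$ on the image of $h$; it remains to see $h$ is surjective.

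For surjectivity, I would show the map $g\colon X\to X$, $g(z)=\gamma_z^{-1}(z)$, is a right inverse of $h$, i.e.\ $h(g(z))=z$ for all $z$; combined with the injectivity of $h$ this gives that $h$ is a bijection with inverse $g$. So fix $z$, set $x=g(z)=\gamma_z^{-1}(z)$, equivalently $\gamma_z(x)=z$, and I must check $\sigma_x^{-1}(x)=z$, i.e.\ $x=\sigma_x(z)=\sigma_x(\gamma_z(x))$. Here is where I expect the real work: I need the YBE, and the natural tool is relation (\ref{YBE1}) evaluated so that the pair $(\sigma_x(z),\gamma_z(x))=(\sigma_x(z),z)$ appears — precisely, (\ref{YBE1}) with the arguments $x,z$ reads $\sigma_x\sigma_z=\sigma_{\sigma_x(z)}\sigma_{\gamma_z(x)}=\sigma_{\sigma_x(z)}\sigma_z$, whence cancelling $\sigma_z$ on the right gives $\sigma_x=\sigma_{\sigma_x(z)}$, and now irretractability yields $x=\sigma_x(z)$, exactly what was needed. (One must double-check cancellation is on the correct side; since the $\sigma$'s are composed as functions and all are bijective, cancelling a common right factor $\sigma_z$ from $\sigma_x\sigma_z=\sigma_{\sigma_x(z)}\sigma_z$ is legitimate.)

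The main obstacle, and the point to verify carefully, is that each cancellation of a $\sigma$-map from a relation of the form (\ref{YBE1}) is valid — i.e.\ that one really can cancel on the appropriate side given that $\sigma$'s are invertible bijections and composition is being used — and that the instance of (\ref{YBE1}) being invoked genuinely has the shape claimed after substituting $\gamma_z(x)=z$ or $\sigma_x^{-1}(x)=z$. Once the algebra of indices is pinned down, both injectivity and surjectivity of $h$ reduce to a single clean application of (\ref{YBE1}) plus irretractability, and the formula $h^{-1}(x)=\gamma_x^{-1}(x)$ falls out of the surjectivity argument. I would also remark that only (\ref{YBE1}) (together with non-degeneracy) is used, so relations (\ref{YBE2}), (\ref{YBE3}) are not needed here; this is worth stating since it clarifies why the hypothesis can be so light.
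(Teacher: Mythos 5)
Your proof is correct, and it takes a genuinely different route from the one in the paper. The published argument first passes to the dual solution so that irretractability can be used in the form $\gamma_x=\gamma_y\Rightarrow x=y$, and then works inside Rump's reformulation of the YBE via the operations $x\cdot y=\gamma_x^{-1}(y)$ and $x:y=\sigma_{\gamma_y^{-1}(x)}(y)$, applying condition (R1) twice: once with $y=x$ to get $x\cdot x=x:x$ (whence injectivity of $x\mapsto\gamma_x^{-1}(x)$), and once with $y=\sigma_x^{-1}(x)$ to get $\gamma_y(x)=y$ (whence surjectivity and the inverse formula). You instead stay entirely with the original solution and extract everything from the single relation (\ref{YBE1}), $\sigma_x\sigma_z=\sigma_{\sigma_x(z)}\sigma_{\gamma_z(x)}$: substituting $\sigma_x(z)=x$ and cancelling $\sigma_x$ on the left gives $\sigma_z=\sigma_{\gamma_z(x)}$, hence $z=\gamma_z(x)$ by irretractability, which pins down any $h$-preimage of $z$ as $\gamma_z^{-1}(z)$; substituting $\gamma_z(x)=z$ and cancelling $\sigma_z$ on the right gives $\sigma_x=\sigma_{\sigma_x(z)}$, hence $x=\sigma_x(z)$, which shows $z\mapsto\gamma_z^{-1}(z)$ is a right inverse of $h$. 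Both cancellations are legitimate because all $\sigma$'s are bijections. What your approach buys is self-containedness and symmetry: it uses irretractability exactly in the form in which it is defined (no dualization), cites no external result of Rump, and makes transparent that only (\ref{YBE1}) together with non-degeneracy is needed; what the paper's approach buys is uniformity with the proof of Theorem~\ref{MainIrr}, which continues in the same (R1)--(R3) calculus and reuses the identity $x=(x*x)\cdot(x*x)$ established along the way.
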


\begin{proof}
As commented above, we may assume that $(X,r)$ is a  non-degenerate set-theoretic solution of the YBE such that
$\gamma_x=\gamma_y$ implies that $x=y$.
Thus  conditions (R1),(R2),(R3) hold. Then, by (R1),
  $$\gamma^{-1}_{x\cdot x}(x\cdot z)=\gamma^{-1}_{x:x}(x\cdot z),$$
  for all $x,z\in X$. Hence,
      $$x\cdot x=x:x.$$
 Now $x:x=\sigma_{x\cdot x}(x)$ and thus $\sigma^{-1}_{x\cdot x}(x\cdot x)=x$. This shows that the map
  $x\mapsto x\cdot x =\gamma^{-1}_x(x)$ is injective.
For $x,y\in X$, put
  $$\sigma^{-1}_x(y)=x*y.$$ For $y=x*x$, we have
  $x=\sigma_x(y)=\sigma_{\gamma^{-1}_y(\gamma_y(x))}(y)=\gamma_y(x) : y$.
Hence by (R1)
 $$x\cdot (\gamma_y(x)\cdot z)=(\gamma_y(x) :y)\cdot (\gamma_y(x)\cdot z) =(y\cdot \gamma_y(x))\cdot (y\cdot z)
=x\cdot (y\cdot z).$$
Therefore $\gamma_y(x)\cdot z=y\cdot z$, which yields
$\gamma_y(x)=y$. Hence
   \begin{eqnarray}  \label{dotstar}
   x&=&y\cdot y=(x*x)\cdot (x*x),
   \end{eqnarray}
 which shows that the map $x\mapsto x\cdot x=\gamma^{-1}_x(x)$ is bijective.  Furthermore the inverse of this map is the map
  $
   x\mapsto x*x= \sigma_x^{-1}(x) =h(x)
   $.
\end{proof}

\begin{theorem}\label{MainIrr}
Let $(X,r)$ be an irretractable non-degenerate set-theoretic solution of the YBE. Then $r$ is bijective.
\end{theorem}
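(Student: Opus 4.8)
The plan is to use Rump's reformulation quoted above: since non-degeneracy already guarantees that every $\gamma_y$ is bijective, conditions (R1)--(R3) hold, and $r$ will be bijective as soon as one shows that, for each $x\in X$, the map $\beta_x\colon X\to X$, $z\mapsto x:z$, is bijective. First I would dispose of the $\sigma$- versus $\gamma$-irretractability issue: by the remark preceding the theorem, $r$ is bijective if and only if its dual $r'(y,x)=(\gamma_y(x),\sigma_x(y))$ is, and the dual of a $\sigma$-irretractable non-degenerate solution is non-degenerate and irretractable with respect to its $\gamma$-maps; so, exactly as in the proof of Lemma~\ref{h}, I may assume throughout that $(X,r)$ is non-degenerate and that $\gamma_x=\gamma_y$ implies $x=y$. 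In this situation Lemma~\ref{h} (more precisely, its proof) supplies the two facts I will lean on: the ``diagonal'' map $q\colon X\to X$, $q(x)=x\cdot x=\gamma_x^{-1}(x)$, is a bijection, and $x\cdot x=x:x$ for every $x$, where $x\cdot y=\gamma_x^{-1}(y)$ and $x:y=\sigma_{\gamma_y^{-1}(x)}(y)$.

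Next I would record the ``transport identities'' coming from (R1)--(R3). One observes first that $r\big(\gamma_z^{-1}(x),z\big)=(x:z,\,x)$, so $\beta_x(z)$ is the first coordinate of $r$ on the fibre $\{(a,b):\gamma_b(a)=x\}$; this is the conceptual reason why bijectivity of all $\beta_x$ is equivalent to bijectivity of $r$. Concretely: for arbitrary $a\in X$, setting $b=\gamma_a(x)$ gives $a\cdot b=x$, whence (R3) yields $x:(a\cdot c)=(b:a)\cdot(b:c)$ for all $c$; since $c\mapsto a\cdot c$ and $\gamma_{b:a}$ are bijections, this gives $\im\beta_x=\gamma_{b:a}^{-1}\big(\im\beta_{\gamma_a(x)}\big)$. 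Similarly, (R2) gives $\beta_{x:b}\big(\im\beta_x\big)=\gamma_{b\cdot x}^{-1}\big(\im\beta_b\big)$ for all $b,x$, and its diagonal instance $y=z$, together with $y:y=y\cdot y$, specializes these to statements purely about $q$. Feeding the surjectivity of $q$ (so that every $w$ equals $v\cdot v=v:v=\beta_v(v)\in\im\beta_v$ for a suitable $v$) through these identities, one reduces an arbitrary target $w$ to the diagonal case handled by Lemma~\ref{h} and concludes $\im\beta_x=X$. For injectivity one argues in the same spirit: from $x:z_1=x:z_2$, applying (R2) to the pairs $(z_1,z_2)$, $(z_2,z_1)$ and to the diagonal pairs $(z_i,z_i)$, and using $a\cdot a=a:a$ together with the injectivity of $q$, one successively strips off the outer ``$:$''-operations and is left with $z_1=z_2$.

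Once $\beta_x$ is known to be bijective for every $x$, Rump's criterion gives that $r$ is a bijective solution; undoing the initial reduction (a solution is bijective if and only if its dual is) completes the proof. The genuine obstacle is the middle step: converting the two-variable statements ``$\beta_x$ is injective'' and ``$\beta_x$ is surjective'' into the one-variable diagonal assertion of Lemma~\ref{h} requires a careful, somewhat delicate cancellation of nested ``$\cdot$'' and ``$:$'' operations via (R1)--(R3). This is precisely the bookkeeping that the passage to Rump's identities (as suggested by the referee) is designed to keep under control; everything surrounding it is formal.
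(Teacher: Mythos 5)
Your overall framework coincides with the paper's: reduce via duality to the case where $\gamma_x=\gamma_y$ implies $x=y$, invoke Rump's criterion so that everything hinges on the bijectivity of $\beta_x\colon z\mapsto x:z$, and feed in Lemma~\ref{h}. But the proposal stops exactly where the proof has to happen. You yourself flag the ``middle step'' --- converting your transport identities into injectivity and surjectivity of $\beta_x$ --- as the genuine obstacle and leave it as unexecuted bookkeeping; that step is the entire content of the theorem, and as sketched it does not obviously go through. Your (R3) identity only expresses $\im \beta_x$ as a bijective image of $\im\beta_{\gamma_a(x)}$, which provides no anchor forcing either image to equal $X$ (for infinite $X$, equality of cardinalities of images proves nothing), and the injectivity argument is not presented in any checkable form. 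Moreover your (R2) identity is wrong as stated: $(x:y):(x:z)=(y\cdot x):(y:z)$ yields $\beta_{x:b}(\im\beta_x)=\beta_{b\cdot x}(\im\beta_b)$, not $\gamma_{b\cdot x}^{-1}(\im\beta_b)$; you have replaced the outer ``$:$'' by ``$\cdot$'' on the right-hand side.

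The irony is that the identity you did derive already contains the whole proof --- you only need its diagonal instance, and then no separate surjectivity/injectivity discussion is required. With $b=\gamma_a(x)$ and $c=a$, your relation $x:(a\cdot c)=(b:a)\cdot(b:c)$ becomes $x:(a\cdot a)=\sigma_x(a)\cdot\sigma_x(a)$, because $b:a=\sigma_{\gamma_a^{-1}(\gamma_a(x))}(a)=\sigma_x(a)$. Writing $q(w)=w\cdot w$, this reads $\beta_x\circ q=q\circ\sigma_x$. By Lemma~\ref{h} the map $q$ is bijective (with inverse $h\colon z\mapsto z*z=\sigma_z^{-1}(z)$), and by non-degeneracy $\sigma_x$ is bijective, so $\beta_x=q\circ\sigma_x\circ q^{-1}$ is a composition of three bijections. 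This is precisely the paper's two-line computation $x:z=\sigma_x(z*z)\cdot\sigma_x(z*z)$, and it uses only (R3) together with Lemma~\ref{h}; neither (R1) nor (R2) is needed at this stage. So the plan is salvageable, but the decisive specialization is missing from what you wrote.
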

\begin{proof}
Again we may assume that $(X,r)$ is a  non-degenerate set-theoretic solution of the YBE such that
$\gamma_x=\gamma_y$ implies that $x=y$.
From (R3) we get that
  $$x:(z\cdot z) =(z\cdot\gamma_{z}(x)): (z\cdot z) =(\gamma_z (x) :z)\cdot (\gamma_{z}(x):z) =\sigma_x(z) \cdot \sigma_x(z).$$
From Lemma~\ref{h} we then get that (see equation (\ref{dotstar}))
  $$x:z =\sigma_{x}(z*z) \cdot \sigma_x (z*z).$$
  Since $x*x=\sigma_x^{-1}(x) = h(x)$, we get from Lemma~\ref{h} that
   the map $z\mapsto x : z =\sigma_{\gamma^{-1}_{z}(x)}(z)$ is bijective.
Hence, by Rump's earlier mentioned result,   $r$ is bijective.
\end{proof}

Note that if $(X,r)$ is an irretractable non-degenerate solution
then for every  $x \in X$ there is a  unique  $y \in X$ such that
$r(x,y) = (x,y)$ and there is a unique $z\in X$ such that
$r(z,x)=(z,x)$. Because $(X,r)$ is left non-degenerate, to prove the
former, it is sufficient to show that $\sigma_x (y)=x$ implies
$\gamma_y(x)=y$. Now, because $(X,r)$ is a solution we obtain from
(\ref{YBE1}) that $\sigma_x \sigma_y = \sigma_{\sigma_x(y)}
\sigma_{\gamma_y(x)} = \sigma_x \sigma_{\gamma_y(x)}$ and thus  $
\sigma_y = \sigma_{\gamma_y(x)}$. The irretractble assumption yields
that  $y = \gamma_y(x)$, as claimed. Similarly one proves the other
claim. Hence the there are at least  ${|X| \choose 2}$ defining
relations for the structure monoid. Furthermore, there are precisely
$|X| \choose 2$ defining relations  if $r$ also is involutive and
thus, in this case, $M(X,r)$ is a monoid with a presentation of the
type  $\langle x_1 , \ldots , x_n \mid R\rangle$, where $R$ is a set
consisting of $n\choose 2$ relations of the type $x_i x_j = x_k x_l$
with $(x_i , x_j) \neq (x_k ,x_l)$ and every word $x_i x_j$ appears
in at most one relation. Note that such a presentation has
associated a map $r\colon X\times X\rightarrow X\times X$, where
$X=\{ x_1,\dots x_n\}$, $r^2=\id_{X^2}$ and $r(x_i,x_j)=(x_k,x_l)$
if and only if either $x_ix_j=x_kx_l$ is one of the relations in $R$
or $x_ix_j$ does not appear in any relation in $R$ and
$(x_k,x_l)=(x_i,x_j)$ in this case. Monoids with this type of
presentation and their algebras have a rich algebraic structure,
when $r$ is non-degenerate, even if $(X,r)$ is not a solution of the
YBE. Such   monoids are said to be of quadratic type, and if
$x_ix_i$ does not appear in any defining relation, then they are
said to be of skew type. We refer the reader to
\cite{CO,GJO,JVC1,JVC2}. In \cite{JVC1} it has been shown that for
such a monoid $r$ is a non-degenerate solution of the YBE if and
only if the monoid is cancellative and $r$ is non-degenerate and
satisfies the cyclic condition, i.e. if for every $x_1,y\in X$ there
exist $x_2, y_1,y_2, z_1,z_2\in X$ such that $x_1 y=y_1 z_1$ and
$x_2 y_1=y_2 z_2$ with $r(x_2,x_1)=(x_2,x_1) $ and
$r(z_2,z_1)=(z_2,z_1)$.
 The latter monoids were first investigated by Gateva-Ivanova and Van den Bergh in \cite{GIV}.

\section{The structure left semi-truss}

Braces and skew braces were introduced to deal with bijective
non-degenerate solutions $(X,r)$ of the YBE. In order to translate
such solutions to associative structures the structure group
$G(X,r)$ and the structure monoid $M(X,r)$ were introduced. The
group $G(X,r)$ turns out to be a skew brace, however a structure
monoid does not fit in this context. Recently, Brzezi\'nski
introduced the algebraic notion of a semi-truss which is built on
two semigroup structures on a given set. We show that structure
monoids of left non-degenerate solutions of the YBE fit in this
context: they turn out to be left semi-trusses with additive
structure that is close to being a normal monoid. We then show that
also the least left cancellative epimorphic image of $M(X,r)$
inherits a left non-degenerate solution of the YBE that restricts to
the original solution $r$ for some interesting class, in particular
if $(X,r)$ is irretractable.

We first recall the definition of a left semi-truss.

\begin{definition}\cite{Br}
A left semi-truss is a quadruple $(A,+,\circ,\phi)$ such that
$(A,+)$ and $(A,\circ)$ are non-empty semigroups and $\phi\colon
A\times A\longrightarrow A$ is a function such that
$$a\circ (b+ c)= (a\circ b)+ \phi(a,c),$$
for all $a,b,c\in A$.
\end{definition}

\begin{example}\label{ex1}
{\rm Let $(X,r)$ be a left non-degenerate set-theoretic solution of
the YBE (not necessarily bijective).
 As stated in Section~\ref{derived},
and with the same notation,  the map
$r'(x,y)=(y,\sigma_y\gamma_{\sigma^{-1}_x(y)}(x))$ defines the left
derived solution on $X$. Let $M=M(X,r)$ and $M'=A(X,r)=M(X,r')$ be
the structure monoids of the solutions $(X,r)$ and $(X,r')$
respectively. From Corollary~\ref{bijectivecocycle} and
Proposition~\ref{lambda'rho'} we obtain a left action
$\lambda'\colon (M,\circ )\longrightarrow \Aut(M',+)$ and a
bijective $1$-cocycle $\pi\colon M\longrightarrow M'$ with respect
to $\lambda'$ satisfying $\lambda'(x)(y)=\sigma_x(y)$ and
$\pi(x)=x$, for all $x,y\in X$. We identify $M$ and $M'$ via $\pi$,
that is $a=\pi(a)$ for all $a\in M$. With this identification, we
obtain  the operation $+$ on $M$, and $a\circ b=a+\lambda'_a(b)$,
for all $a,b\in M$. Put $\phi(a,b)=\lambda'_a(b)$, for all $a,b\in
M$. Then
$$a\circ (b+c)=a+\lambda'_a(b+c)=a+\lambda'_a(b)+\lambda'_a(c)=(a\circ b)+\phi(a,c).$$
Furthermore $M + a \subseteq a+M$, for all $a\in M$. Hence
$(M,+,\circ,\phi)$ is a left semi-truss. Note that if, furthermore,
$r$ is bijective then it easily can be verified that $(X,r')$ is a
right non-degenerate solution and thus  $M+a =a+M$ for all $a\in M$;
that is $(M,+)$ consists of normal elements. As shown in \cite{JKV},
this property is fundamental in the study of the associated
structure algebra $KM(X,r)$.}
\end{example}

 In the remainder of this section we show that if $(M,+,\circ,\phi)$ is a left semi-truss such that for every $a,b\in M$ there exists a unique $c(a,b)\in M$ such that $a+b=b+c(a,b)$,
then there exists a set-theoretical solution of the YBE on $M$, say
$(M,r')$. In case that $M=M(X,r)/\eta$, the least cancellative
epimorphic image of $M(X,r)$ it follows that $r'$ is the (unique)
extension of $r$ to $M$.

\begin{lemma}\label{st}
Let $(A,+)$ be a non-empty semigroup such that for each $(a,b)\in A\times A$
there exists a unique $c(a,b)\in A$ such that
$$a+ b=b+ c(a,b).$$
Then $(A,r')$, where
$$r'(a,b)=(b,c(a,b)),$$
for all $a,b\in A$, is a set-theoretic solution of the YBE.
\end{lemma}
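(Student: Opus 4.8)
The plan is to verify directly that $r'$ is a set-theoretic solution of the YBE. First note that the hypothesis makes $r'$ a well-defined map, since $c(a,b)$ is uniquely determined by the pair $(a,b)$; moreover $r'$ has the form $r'(a,b)=(\sigma'_a(b),\gamma'_b(a))$ with $\sigma'_a=\id_A$ and $\gamma'_b(a)=c(a,b)$, so $(A,r')$ is in particular left non-degenerate. I would evaluate the two composites $(r'\times\id)\circ(\id\times r')\circ(r'\times\id)$ and $(\id\times r')\circ(r'\times\id)\circ(\id\times r')$ on an arbitrary triple $(a,b,d)\in A^3$. A direct bookkeeping computation shows that the first sends $(a,b,d)$ to $\bigl(d,\,c(b,d),\,c(c(a,b),d)\bigr)$ and the second sends it to $\bigl(d,\,c(b,d),\,c(c(a,d),c(b,d))\bigr)$. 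Hence the YBE for $r'$ is equivalent to the single identity
$$c(c(a,b),d)=c(c(a,d),c(b,d))\qquad\text{for all }a,b,d\in A.$$
(Equivalently, one may invoke the reformulation $(\ref{YBE1})$--$(\ref{YBE3})$: since every $\sigma'$ is the identity, $(\ref{YBE1})$ and $(\ref{YBE2})$ hold trivially and $(\ref{YBE3})$ reduces to exactly this identity.)

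To establish the identity, I would compute the element $a+b+d\in A$ in two different ways, each time moving $d$ and then $b$ to the front by means of the defining relation $u+v=v+c(u,v)$. On one side, using $b+d=d+c(b,d)$, then $a+d=d+c(a,d)$, then $c(a,d)+c(b,d)=c(b,d)+c(c(a,d),c(b,d))$, and finally $d+c(b,d)=b+d$, associativity yields
$$a+b+d=(b+d)+c(c(a,d),c(b,d)).$$
On the other side, using $a+b=b+c(a,b)$ and then $c(a,b)+d=d+c(c(a,b),d)$, associativity yields
$$a+b+d=(b+d)+c(c(a,b),d).$$
But $a+b+d=a+(b+d)=(b+d)+c(a,b+d)$ by the very definition of $c(a,b+d)$, so the \emph{uniqueness} clause of the hypothesis, applied to the pair $(a,b+d)$, forces both $c(c(a,d),c(b,d))$ and $c(c(a,b),d)$ to equal $c(a,b+d)$, hence to coincide. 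This is the required identity, so $(A,r')$ is a solution of the YBE.

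The only genuine obstacle is that $(A,+)$ is merely assumed to be a semigroup, so one cannot simply cancel the common left factor $b+d$ from the two expressions for $a+b+d$; it is precisely the uniqueness of $c$ that plays the role of left cancellativity here, and it must be invoked globally through the single pair $(a,b+d)$ rather than coordinate by coordinate. Everything else is routine manipulation using associativity and the relation $u+v=v+c(u,v)$.
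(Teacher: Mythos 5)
Your proof is correct and follows essentially the same route as the paper: both reduce the YBE to the identity $c(c(a,b),d)=c(c(a,d),c(b,d))$ by tracking the triple $(a,b,d)$ through the two composites, and both derive that identity by rewriting $a+b+d$ in two ways as $(b+d)+(\cdot)$ and invoking the uniqueness of $c(a,b+d)$ in place of left cancellation. Your closing remark correctly isolates the one non-routine point, which is exactly where the paper's proof also leans on the uniqueness hypothesis.
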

\begin{proof}
Let $(a,b,d)\in A^3$. We have
\begin{eqnarray*}
a+ b+ d &=& b+ c(a,b)+ d\\
&=& b+ d+ c(c(a,b),d)\\
\end{eqnarray*}
and also
\begin{eqnarray*}
a+ b+ d &=& a+ d+ c(b,d)\\
&=& d+ c(a,d)+ c(b,d)\\
&=& d+ c(b,d)+ c(c(a,d),c(b,d))\\
&=& b+ d+ c(c(a,d),c(b,d)).
\end{eqnarray*}
Hence, by the uniqueness assumption,
\begin{equation}\label{c}
c(a,b+ d)=c(c(a,b),d)=c(c(a,d),c(b,d)). \end{equation}
Now we have
$$r'_1r'_2r'_1(a,b,d)=r'_1r'_2(b,c(a,b),d)=r'_1(b,d,c(c(a,b),d))=(d,c(b,d),c(c(a,b),d))$$
and
\begin{eqnarray*}r'_2r'_1r'_2(a,b,d)&=&r'_2r'_1(a,d,c(b,d))=r'_2(d,c(a,d),c(b,d))\\
&=&(d,c(b,d),c(c(a,d),c(b,d))).\end{eqnarray*}
Therefore, by
(\ref{c}) $r'_1r'_2r'_1=r'_2r'_1r'_2$, and the result follows.
\end{proof}

\begin{proposition}\label{st2}
Let $(A,+)$ and $(A,\circ)$ be non-empty semigroups. Let $\lambda\colon (A,\circ)\rightarrow \Aut(A,+)$ be a homomorphism such that
$a\circ b=a+\lambda_a(b)$, for all $a,b\in A$, where $\lambda(a)=\lambda_a$. In particular, $(A,+,\circ,\phi)$ is a left semi-truss with
 $\phi (a,b)=\lambda_a(b)$, for all $a,b\in
A$.
Suppose that for each $(a,b)\in A\times A$ there exists
a unique $c(a,b)\in A$ such that
$$a+ b=b+ c(a,b).$$
Then $(A,r)$, where
$$r(a,b)=(\lambda_a(b),\lambda^{-1}_{\lambda_a(b)}(c(a,\lambda_a(b)))),$$
for all $a,b\in A$, is a left non-degenerate set-theoretic solution
of the YBE.
\end{proposition}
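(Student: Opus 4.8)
The plan is to deduce Proposition~\ref{st2} from Lemma~\ref{st} applied to the derived additive structure, and then to transport the resulting solution through the bijective $1$-cocycle point of view. More precisely, Lemma~\ref{st} already gives that $(A,r')$ with $r'(a,b)=(b,c(a,b))$ is a set-theoretic solution of the YBE. The map $r$ in the statement is obtained from $r'$ by conjugating the first (resp.\ second) coordinate by the bijection $\lambda_a$; so the strategy is to exhibit $r$ as the image of $r'$ under the natural transformation that relates a ``$+$-solution'' to its ``$\circ$-version,'' exactly mirroring the passage from $(X,r)$ to its derived solution $(X,r')$ in Example~\ref{ex1}, read backwards.

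First I would write $r(a,b)=(\lambda_a(b),\rho_b(a))$ with $\lambda_a(b)$ as given and $\rho_b(a)=\lambda^{-1}_{\lambda_a(b)}(c(a,\lambda_a(b)))$, and verify the key compatibility: using $a\circ b=a+\lambda_a(b)$ one checks that $a\circ b=\lambda_a(b)\circ\rho_b(a)$, since $\lambda_a(b)\circ\rho_b(a)=\lambda_a(b)+\lambda_{\lambda_a(b)}(\rho_b(a))=\lambda_a(b)+c(a,\lambda_a(b))=a+\lambda_a(b)=a\circ b$, where the middle equality is the defining property of $c$. Thus $r$ is precisely the map that makes the identity map $A\to A$ a ``$1$-cocycle-like'' identification between $(A,\circ)$ and $(A,+)$ with action $\lambda$, in the sense of Theorem~\ref{main}. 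Left non-degeneracy of $r$ is immediate: $\lambda_a\in\Aut(A,+)$ is bijective by hypothesis.

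Next I would prove that $r$ satisfies the YBE. The cleanest route is to check directly that $r$ is obtained from $r'$ by the substitution that conjugates by $\lambda$; concretely, one shows the braid relation $r_1r_2r_1=r_2r_1r_2$ on $A^3$ by reducing it to the braid relation for $r'$ already established in Lemma~\ref{st}. To do this I would track the three ``$\lambda$-twists'' and use that $\lambda$ is a homomorphism $(A,\circ)\to\Aut(A,+)$, together with the cocycle identity $a\circ b=\lambda_a(b)\circ\rho_b(a)$ just verified, to express each side of the braid relation for $r$ in terms of the components of $r'$ applied to suitably twisted arguments; the relations \eqref{c} on $c$ from the proof of Lemma~\ref{st} then close the computation. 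Alternatively, one can simply verify the three equivalent conditions \eqref{YBE1}--\eqref{YBE3} for $r$ directly, expanding $\sigma_a=\lambda_a$ and $\gamma_b(a)=\lambda^{-1}_{\lambda_a(b)}(c(a,\lambda_a(b)))$ and repeatedly invoking \eqref{c} and multiplicativity of $\lambda$; this is bookkeeping-heavy but elementary.

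The main obstacle I anticipate is the bookkeeping in the braid-relation verification: keeping the nested $\lambda^{-1}_{(\cdot)}$ subscripts straight while matching them against the $c(c(a,b),d)=c(c(a,d),c(b,d))$ identity. I would mitigate this by never unfolding $\rho_b(a)$ back into $c$ at intermediate stages, working instead with the two clean invariants $a\circ b=a+\lambda_a(b)$ and $a\circ b=\lambda_a(b)\circ\rho_b(a)$ together with the already-proven solution $(A,r')$; this lets me quote Lemma~\ref{st} rather than rederive \eqref{c}. The uniqueness of $c(a,b)$ is used exactly once more, to guarantee that $\rho_b$ is well defined, and the homomorphism property of $\lambda$ is used to commute twists past one another. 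Putting these together yields that $(A,r)$ is a left non-degenerate set-theoretic solution of the YBE, as claimed.
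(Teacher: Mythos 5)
Your proposal is correct and follows essentially the same route as the paper: the paper makes your ``conjugation by $\lambda$-twists'' precise via the explicit bijection $J(a,b,d)=(a,\lambda_a(b),\lambda_a\lambda_b(d))$ of $A^3$ and checks $J^{-1}r'_iJ=r_i$ for $i=1,2$, so the braid relation for $r$ is inherited from the solution $r'$ of Lemma~\ref{st}, with left non-degeneracy immediate from $\lambda_a\in\Aut(A,+)$. The one ingredient you leave implicit, the equivariance $\lambda^{-1}_a(c(b,d))=c(\lambda^{-1}_a(b),\lambda^{-1}_a(d))$, is exactly where the uniqueness of $c$ is invoked once more, just as you anticipate.
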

\begin{proof} Let $J\colon A^3\longrightarrow A^3$ be the map
defined by $J(a,b,d)=(a,\lambda_a(b),\lambda_a\lambda_b(d))$.
Clearly $J$ is bijective and
$J^{-1}(a,b,d)=(a,\lambda^{-1}_a(b),\lambda^{-1}_{\lambda^{-1}_a(b)}\lambda^{-1}_a(d))$,
for all $a,b,d\in A$. We have
\begin{eqnarray*}
J^{-1}r'_1J(a,b,d)&=&J^{-1}r'_1(a,\lambda_a(b),\lambda_a\lambda_b(d))\\
&=&J^{-1}(\lambda_a(b),c(a,\lambda_a(b)),\lambda_a\lambda_b(d))\\
&=&(\lambda_a(b),\lambda^{-1}_{\lambda_a(b)}(c(a,\lambda_a(b))),\lambda^{-1}_{\lambda^{-1}_{\lambda_a(b)}(c(a,\lambda_a(b)))}\lambda^{-1}_{\lambda_a(b)}\lambda_a\lambda_b(d)),
\end{eqnarray*}
where $r'$ is defined as in Lemma \ref{st}.
Since $a\circ b=a+ \lambda_a(b)=\lambda_a(b)+
c(a,\lambda_a(b))=\lambda_a(b)\circ\lambda^{-1}_{\lambda_a(b)}(c(a,\lambda_a(b)))$
it follows that
 $J^{-1}r'_1J=r_1$. Similarly
\begin{eqnarray*}
J^{-1}r'_2J(a,b,d)&=&J^{-1}r'_2(a,\lambda_a(b),\lambda_a\lambda_b(d))\\
&=&J^{-1}(a,\lambda_a\lambda_b(d),c(\lambda_a(b),\lambda_a\lambda_b(d)))\\
&=&(a,\lambda_b(d),\lambda^{-1}_{\lambda_b(d)}\lambda^{-1}_a(c(\lambda_a(b),\lambda_a\lambda_b(d))))
\end{eqnarray*}
Note that
$$\lambda^{-1}_a(d)+\lambda^{-1}_a(c(b,d))=\lambda^{-1}_a(d+c(b,d))=\lambda^{-1}_a(b+d)=\lambda^{-1}_a(b)+\lambda^{-1}_a(d),$$
for all $a,b,d\in A$. Hence, by the uniqueness assumption,
$\lambda^{-1}_a(c(b,d))=c(\lambda^{-1}_a(b),\lambda^{-1}_a(d))$.
Since each $\lambda_a$ is bijective it follows that
\begin{eqnarray*}
J^{-1}r'_2J(a,b,d)&=&(a,\lambda_b(d),\lambda^{-1}_{\lambda_b(d)}\lambda^{-1}_a(c(\lambda_a(b),\lambda_a\lambda_b(d))))\\
&=&(a,\lambda_b(d),\lambda^{-1}_{\lambda_b(d)}(c(b,\lambda_b(d)))).
\end{eqnarray*}
Thus $J^{-1}r'_2J=r_2$. By Lemma~\ref{st}, $(A,r')$ is a
set-theoretic solution of the YBE. Therefore also $(A,r)$ is a
set-theoretic solution of the YBE, and the result follows.
\end{proof}

Let $(X,r)$ be a left non-degenerate set-theoretic solution of the
YBE. We will write $r(x,y)=(\sigma_x(y),\gamma_y(x))$, for all
$x,y\in X$. Thus the $\sigma_x$ are bijective maps. The derived
solution of $(X,r)$ is $(X,r')$, where
$$r'(x,y)=(y,\sigma_y(\gamma_{\sigma^{-1}_x(y)}(x))),$$
for all $x,y\in X$. We will use the notation of Example~\ref{ex1}.
Thus we have $M=M(X,r)$ and the left semi-truss $(M,+,\circ,\phi)$,
where $\phi(a,b)=\lambda'_a(b)$, for all $a,b\in M$. Recall that
$\lambda'\colon (M,\circ)\longrightarrow \Aut(M,+)$ is an
homomorphism, that is, an action of $(M,\circ)$ on $(M,+)$, and
$\id\colon M\longrightarrow M$ is a bijective $1$-cocycle with
respect to $\lambda'$ (because $a\circ b=a+\lambda'_a(b)$).

Let $\eta$ be the left cancellative congruence on $(M,+)$, that is,
$\eta$ is the smallest congruence such that $\bar M=(M,+)/\eta$ is a
left cancellative monoid.

We shall see a description of the elements in $\eta$. Let
$$\eta_0=\{ (a,b)\in M^2\mid \exists c\in M\mbox{ such that }c+a=c+b\}.$$
Note that $\eta_0$ is a reflexive and symmetric binary relation on
$M$. Let $\eta_1$ be its transitive closure, that is
$$\eta_1=\{(a,b)\in M^2\mid \exists a_1,\dots,a_n\in M \mbox{ such that }(a,a_1),(a_1,a_2),\dots ,(a_n,b)\in \eta_0\}.$$
Thus $\eta_1$ is an equivalence relation on $M$. Let
\begin{align*}\eta_2=&\{ (c+a,c+b)\in M^2\mid c\in M\mbox{ such that
}(a,b)\in \eta_1\}\\
&\cup \{ (a,b)\in M^2\mid \exists c\in M\mbox{ such that
}(c+a,c+b)\in \eta_1\},
\end{align*}
and for every $m\geq 1$ we
define
$$\eta_{2m+1}=\{(a,b)\in M^2\mid \exists a_1,\dots,a_n\in M \mbox{ such that }(a,a_1),(a_1,a_2),\dots ,(a_n,b)\in \eta_{2m}\}$$
and
\begin{align*}\eta_{2m+2}=&\{ (c+a,c+b)\in M^2\mid c\in M\mbox{
such that }(a,b)\in \eta_{2m+1}\}\\
&\cup \{ (a,b)\in M^2\mid \exists c\in M\mbox{ such that
}(c+a,c+b)\in \eta_{2m+1}\},\end{align*}
Note that
$\eta_n\subseteq\eta_{n+1}\subseteq \eta$ for all $n\geq 0$. Let
$\eta'=\cup_{n=0}^{\infty}\eta_n$.

\begin{lemma}\label{lemma1}\label{congruence}
With the above notation we have that $\eta'=\eta$ and
$\lambda'_a=\lambda'_b$, for all $(a,b)\in \eta$. Furthermore, for
all $z\in M$,
$$\eta=\{ ( \lambda'_z(a),\lambda'_z(b))\mid (a,b)\in\eta\} = \{ ( (\lambda'_z)^{-1}(a),(\lambda'_z)^{-1}(b))\mid (a,b)\in\eta\},$$
and $\eta$ also is a congruence on $(M,\circ)$.
\end{lemma}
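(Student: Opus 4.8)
The plan is to establish the four assertions of Lemma~\ref{congruence} in the natural order: first $\eta'=\eta$, then $\lambda'_a=\lambda'_b$ whenever $(a,b)\in\eta$, then the $\lambda'_z$-invariance of $\eta$, and finally that $\eta$ is a congruence for $\circ$ as well.

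For $\eta'=\eta$: since each $\eta_n\subseteq\eta$ and $\eta=\bigcup\eta_n=\eta'$ would follow once we know $\eta'$ is already the left cancellative congruence, the substance is to check that $\eta'$ is (i) a congruence on $(M,+)$ and (ii) left cancellative, and (iii) that any congruence $\theta$ making $(M,+)/\theta$ left cancellative contains $\eta'$. Point (iii) is easy: $\eta_0\subseteq\theta$ because $c+a=c+b$ forces $\bar a=\bar b$ in any left cancellative quotient, and then an induction on $n$ using that $\theta$ is a congruence gives $\eta_n\subseteq\theta$ for all $n$. For (i), reflexivity, symmetry and transitivity of $\eta'$ are built into the odd-indexed levels (transitive closure) and the construction; compatibility with $+$ on the left is exactly what the first clause of each $\eta_{2m+2}$ provides, and compatibility on the right needs the observation that if $(a,b)\in\eta_0$, witnessed by $c$, then $(a+d,b+d)$ is witnessed by $c$ as well (from $c+a=c+b$ we get $c+a+d=c+b+d$), and this propagates through the inductive construction of the $\eta_n$. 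Point (ii), left cancellativity of the quotient: if $(c+a,c+b)\in\eta'$ then $(c+a,c+b)\in\eta_n$ for some $n$, and the second clause in the definition of $\eta_{n+1}$ (or $\eta_{2m+2}$) puts $(a,b)$ into the next even level, hence into $\eta'$; one has to be a little careful about the parity bookkeeping, but morally the even levels were designed precisely to absorb left cancellation.

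For $\lambda'_a=\lambda'_b$ when $(a,b)\in\eta$: by $\eta=\eta'$ it suffices to prove this for $(a,b)$ ranging over $\eta_0$ and then propagate. If $c+a=c+b$ in $M=A(X,r)$, I would like to conclude $\lambda'_a=\lambda'_b$ in $\End(A(X,r))$. The cleanest route is to recall from Example~\ref{ex1} (or directly from Proposition~\ref{cocycle} and Corollary~\ref{bijectivecocycle}) that under the identification $M=M(X,r)\cong A(X,r)$ via the bijective 1-cocycle $\pi$, the $\circ$-multiplication satisfies $a\circ b=a+\lambda'_a(b)$ and $\lambda'$ is a homomorphism $(M,\circ)\to\Aut(M,+)$; then the equality $c+a=c+b$ means $c\circ(\lambda'_c)^{-1}(a)=c\circ(\lambda'_c)^{-1}(b)$, and applying $\lambda'$ gives $\lambda'_c\lambda'_{(\lambda'_c)^{-1}(a)}=\lambda'_c\lambda'_{(\lambda'_c)^{-1}(b)}$, whence $\lambda'_{(\lambda'_c)^{-1}(a)}=\lambda'_{(\lambda'_c)^{-1}(b)}$. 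This is not yet $\lambda'_a=\lambda'_b$, so instead I expect the right argument is the symmetric one: from $c+a=c+b$ directly, note $\lambda'$ being a homomorphism for $\circ$ and the cocycle identity $\pi(c\circ x)=\pi(c)+\lambda'_c(\pi(x))$ together force, after suitable manipulation, $\lambda'$ to descend through $\eta_0$; the key computational identity to extract is that $\lambda'_{a+x}$ depends only on the $\eta$-class of $a$. I anticipate this is cleanest to phrase as: define $N=\ker$ of the composite $(M,+)\to\Aut(M,+)$, $a\mapsto\lambda'_a$ (a priori only a map, but it is multiplicative in the sense $\lambda'_{a+b}=\lambda'_a\lambda'_b$ since $\lambda'$ is a $\circ$-homomorphism and $a+b$, $a\circ b$ generate the same $\lambda'$ because $\lambda'_{\lambda'_a(b)}=\lambda'_{a}\lambda'_b(\lambda'_a)^{-1}$... ); in any case the congruence $\eta_{\lambda'}:=\{(a,b):\lambda'_a=\lambda'_b\}$ contains $\eta_0$ because a left cancellation argument on $\Aut(M,+)$ is available there (that group is cancellative!), and then $\eta_{\lambda'}\supseteq\eta'=\eta$. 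This last point — that $(a,b)\in\eta_0\Rightarrow\lambda'_a=\lambda'_b$, using cancellativity of the automorphism group — I expect to be the main obstacle, because it requires correctly identifying how $\lambda'$ interacts with $+$ versus $\circ$.

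Once $\lambda'_a=\lambda'_b$ on $\eta$ is known, the remaining two claims are short. For the $\lambda'_z$-invariance: if $(a,b)\in\eta$ then $(\lambda'_z(a),\lambda'_z(b))\in\eta$ because $\eta'=\eta$ and each level $\eta_n$ is $\lambda'_z$-invariant — $\lambda'_z$ is an automorphism of $(M,+)$, so it carries $\eta_0$ to $\eta_0$ (if $c+a=c+b$ then $\lambda'_z(c)+\lambda'_z(a)=\lambda'_z(c)+\lambda'_z(b)$), and hence carries each inductively-built $\eta_n$ to itself; applying the same to $(\lambda'_z)^{-1}$ gives the reverse inclusion, yielding both displayed equalities. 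Finally, $\eta$ is a congruence for $\circ$: given $(a,b)\in\eta$ and $z\in M$, compute $z\circ a=z+\lambda'_z(a)$ and $z\circ b=z+\lambda'_z(b)$; since $(\lambda'_z(a),\lambda'_z(b))\in\eta$ by the previous point and $\eta$ is a $+$-congruence, $(z\circ a,z\circ b)\in\eta$; and $a\circ z=a+\lambda'_a(z)$, $b\circ z=b+\lambda'_b(z)=b+\lambda'_a(z)$ using $\lambda'_a=\lambda'_b$, so $(a\circ z,b\circ z)\in\eta$ again because $(a,b)\in\eta$ and $\eta$ respects $+$ on the left. This finishes the proof.
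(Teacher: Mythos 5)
Most of your outline tracks the paper's proof: the verification that $\eta'$ is a left cancellative $+$-congruence squeezed between the levels $\eta_n$ and $\eta$, the $\lambda'_z$-stability of each $\eta_n$ (obtained by applying $(\lambda'_z)^{\pm 1}$ to a witness equation $c+a=c+b$), and the final deduction that $\eta$ is a $\circ$-congruence from $z\circ a=z+\lambda'_z(a)$ and $a\circ z=a+\lambda'_a(z)=b+\lambda'_b(z)$ are all essentially the arguments in the paper. The genuine gap is the step you yourself flag as the main obstacle: $(a,b)\in\eta_0\Rightarrow\lambda'_a=\lambda'_b$. You correctly derive $\lambda'_{(\lambda'_c)^{-1}(a)}=\lambda'_{(\lambda'_c)^{-1}(b)}$ from $c+a=c\circ(\lambda'_c)^{-1}(a)$, observe that this is not yet the desired equality, and then retreat to a sketch resting on the identity $\lambda'_{a+b}=\lambda'_a\lambda'_b$ and on $\eta_{\lambda'}=\{(a,b)\mid\lambda'_a=\lambda'_b\}$ being a $+$-congruence. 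Neither is available: $\lambda'$ is a homomorphism for $\circ$, not for $+$, and what one actually has is $\lambda'_{a+b}=\lambda'_{a\circ(\lambda'_a)^{-1}(b)}=\lambda'_a\lambda'_{(\lambda'_a)^{-1}(b)}$, which depends on $b$ through $(\lambda'_a)^{-1}(b)$ rather than through $\lambda'_b$ alone; so $\eta_{\lambda'}$ has no evident compatibility with $+$, the ``kernel'' $N$ you invoke is not defined, and the interpolated identity $\lambda'_{\lambda'_a(b)}=\lambda'_a\lambda'_b(\lambda'_a)^{-1}$ is unproved. As it stands, the central claim of the lemma is asserted but not established.

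The paper closes this step by reordering your argument: it first proves, level by level, that $\eta_n=\{(\lambda'_z(a),\lambda'_z(b))\mid(a,b)\in\eta_n\}=\{((\lambda'_z)^{-1}(a),(\lambda'_z)^{-1}(b))\mid(a,b)\in\eta_n\}$ for every $z$ --- the invariance you only prove afterwards and never feed back in --- and then combines this with the very computation you performed. Since $\eta_0$ coincides with $\{(\lambda'_c(a'),\lambda'_c(b'))\mid(a',b')\in\eta_0\}$, the identity $\lambda'_{(\lambda'_c)^{-1}(a)}=\lambda'_{(\lambda'_c)^{-1}(b)}$, valid whenever $c$ witnesses $(a,b)\in\eta_0$, is exactly what the paper uses to conclude $\lambda'_a=\lambda'_b$ on all of $\eta_0$; an induction on $n$ using the same stability of the levels then propagates the conclusion from $\eta_0$ to every $\eta_n$ and hence to $\eta=\eta'$. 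So the missing ingredient is not a new computation but the realization that the $\lambda'_z$-stability of the $\eta_n$ must be established first and then combined with the identity you already had; your fallback route via a purported additive multiplicativity of $\lambda'$ would fail.
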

\begin{proof}
First we shall prove that $\eta'$ is a congruence on $(M,+)$.
Clearly $\eta'$ is reflexive and symmetric because so is each
$\eta_n$. Let $a,b,c\in M$ such that $(a,b),(b,c)\in \eta'$. There
exists a positive integer $m$ such that $(a,b),(b,c)\in
\eta_{2m+1}$. Since $\eta_{2m+1}$ is the transitive closure of
$\eta_{2m}$, we have that $(a,c)\in \eta_{2m+1}\subseteq \eta'$.
Hence $\eta'$ is an equivalence relation. Note that every $\eta_n$
satisfies that $(x+z,y+z)\in\eta_n$,  for all $(x,y)\in \eta_n$.
Hence $(a+c,b+c)\in \eta_{2m+1}\subseteq \eta'$. Since $(a,b)\in
\eta_{2m+1}$, we have that $(c+a,c+b)\in \eta_{2m+2}\subseteq
\eta'$. Therefore $\eta'$ is a congruence.

Let $a,b,c\in M$ be elements such that $(c+a,c+b)\in \eta'$. There
exists a positive integer $t$ such that $(c+a,c+b)\in \eta_{2t+1}$.
Thus $(a,b)\in \eta_{2t+2}\subseteq \eta'$. Hence $(M,+)/\eta'$ is a
left cancellative monoid. Since $\eta'\subseteq \eta$, we have
$\eta'=\eta$ by the definition of $\eta$.

Let $(a,b)\in \eta_0$. Then there exists $c\in M$ such that
$c+a=c+b$. Let $z\in M$. We have that
$$(\lambda'_z)^{\varepsilon}(c)+(\lambda'_z)^{\varepsilon}(a)=(\lambda'_z)^{\varepsilon}(c+a)=(\lambda'_z)^{\varepsilon}(c+b)=(\lambda'_z)^{\varepsilon}(c)+(\lambda'_z)^{\varepsilon}(b),$$
for $\varepsilon=\pm 1$. Therefore
$\eta_0=\{(\lambda'_z(a),\lambda'_z(b))\mid (a,b)\in \eta_0\}= \{ ( (\lambda'_z)^{-1}(a),(\lambda'_z)^{-1}(b))\mid (a,b)\in\eta_0\}$.
Thus, clearly
$$\eta_1=\{(\lambda'_z(a),\lambda'_z(b))\mid (a,b)\in
\eta_1\}= \{ ( (\lambda'_z)^{-1}(a),(\lambda'_z)^{-1}(b))\mid (a,b)\in\eta_1\}.$$ Let $(a,b)\in \eta_2$, then either there exist
$c,a',b'\in M$ such that $(a',b')\in \eta_1$ and
$(a,b)=(c+a',c+b')$, or there exists $d\in M$ such that
$(d+a,d+b)\in \eta_1$.  In the first case, we have that
$$((\lambda'_z)^{\varepsilon}(a),(\lambda'_z)^{\varepsilon}(b))=((\lambda'_z)^{\varepsilon}(c)+(\lambda'_z)^{\varepsilon}(a'),(\lambda'_z)^{\varepsilon}(c)+(\lambda'_z)^{\varepsilon}(b')),$$
for $\varepsilon=\pm 1$. Since
$((\lambda'_z)^{\varepsilon}(a'),(\lambda'_z)^{\varepsilon}(b'))\in
\eta_1$, we get that
$((\lambda'_z)^{\varepsilon}(a),(\lambda'_z)^{\varepsilon}(b))\in
\eta_2$, in this case. In the second case, since $(d+a,d+b)\in
\eta_1$, we have that
$((\lambda'_z)^{\varepsilon}(d)+(\lambda'_z)^{\varepsilon}(a),(\lambda'_z)^{\varepsilon}(d)+(\lambda'_z)^{\varepsilon}(b))\in
\eta_1$. Thus also in this case we have that
$((\lambda'_z)^{\varepsilon}(a),(\lambda'_z)^{\varepsilon}(b))\in
\eta_2$. Therefore
$$\eta_2=\{(\lambda'_z(a),\lambda'_z(b))\mid (a,b)\in
\eta_2\}= \{ ( (\lambda'_z)^{-1}(a),(\lambda'_z)^{-1}(b))\mid (a,b)\in\eta_2\}.$$ Now it is easy to show by induction on $n$ that
$$\eta_n=\{(\lambda'_z(a),\lambda'_z(b))\mid (a,b)\in
\eta_n\}= \{ ( (\lambda'_z)^{-1}(a),(\lambda'_z)^{-1}(b))\mid (a,b)\in\eta_n\},$$ for all non-negative integer $n$. Hence
$$\eta=\{ (\lambda'_z(a),\lambda'_z(b))\mid (a,b)\in\eta\}= \{ ( (\lambda'_z)^{-1}(a),(\lambda'_z)^{-1}(b))\mid (a,b)\in\eta\}.$$
Let $(a,b)\in \eta_0$. Then there exists $c\in M$ such that
$c+a=c+b$. Hence
$c\circ(\lambda'_c)^{-1}(a)=c+a=c+b=c\circ(\lambda'_c)^{-1}(b)$.
Hence,
$$\lambda'_{c}\lambda'_{(\lambda'_c)^{-1}(a)}=\lambda'_{c\circ(\lambda'_c)^{-1}(a)}=\lambda'_{c\circ(\lambda'_c)^{-1}(b)}=\lambda'_{c}\lambda'_{(\lambda'_c)^{-1}(b)}$$
and thus
$$\lambda'_{(\lambda'_c)^{-1}(a)}=\lambda'_{(\lambda'_c)^{-1}(b)}.$$
Since $\eta_0=\{(\lambda'_c(a),\lambda'_c(b))\mid (a,b)\in
\eta_0\}$, we have that $\lambda'_a=\lambda'_b$, for all $(a,b)\in
\eta_0$. Because
$$\eta_n=\{(\lambda'_z(a),\lambda'_z(b))\mid (a,b)\in \eta_n\},$$ for
all non-negative integers $n$, it is easy to prove, by induction on
$n$, that $\lambda'_a=\lambda'_b$, for all $(a,b)\in \eta_n$. Hence
$\lambda'_a=\lambda'_b$, for all $(a,b)\in \eta$.

Let $(a,b)\in \eta$. Then $(\lambda'_c(a),\lambda'_c(b))\in \eta$.
Thus $(c\circ a,c\circ b)=(c+\lambda'_c(a),c+\lambda'_c(b))\in
\eta$. Since $\lambda'_a=\lambda'_b$, we have
$$(a\circ c,b\circ c)=(a+\lambda'_a(c),b+\lambda'_b(c))=(a+\lambda'_a(c),b+\lambda'_a(c))\in \eta.$$
Hence $\eta$ is a congruence on $(M,\circ)$, and the result follows.
\end{proof}

With the assumptions and notations as in  Example~\ref{ex1}, $\bar
M=M/\eta$. Let $ M\longrightarrow \bar M: a\mapsto \overline{a}$ be
the natural projection. Let $\bar\lambda\colon (\bar
M,\circ)\longrightarrow \Aut(\bar M,+)$ be the map defined by
$\bar\lambda(\bar a)=\bar\lambda_{\bar a}$ and $\bar\lambda_{\bar
a}(\bar b)=\overline{\lambda'_a(b)}$, for all $a,b\in M$.

Note that $\bar\lambda$ is well-defined, because if $\bar c=\bar a$
and $\bar d=\bar b$, then, by Lemma~\ref{congruence},
$\overline{\lambda'_a (b)} =\overline{\lambda'_a(d)}$ and
$\lambda'_a=\lambda'_c$, and
$$\overline{\lambda'_a(b)}=\overline{\lambda'_a(d)}=\overline{\lambda'_c(d)}.$$
Now it is easy to check that $\bar\lambda_{\bar a}\in \Aut(\bar
M,+)$ and that $\bar\lambda$ is a homomorphism such that $\bar
a\circ\bar b=\bar a+\bar\lambda_{\bar a}(\bar b)$, for all $a,b\in
M$.

\begin{remark} \label{remarkbijective}
{\rm If, furthermore,  the left non-degenerate set-theoretic solution $(X,r)$ is   finite and  bijective  then one can say more.
To do so, it is convenient to keep the notation $M=M(X,r)$ and $A=A(X,r)$. So $M\subseteq A\rtimes \im{\lambda'}$.
Jespers, Kubat and Van Antwerpen \cite[Proposition~2.9]{JKV} proved that there exists $t \geq 1$ and a central element $(z,1)\in M$, with  $z\in Z(A)$ and $g(z)=z$ for all $g\in \im (\lambda')$,
such that the least cancellative congruence on $(A,+)$ is
\begin{align*}
    \eta  &
    = \{ (a,b) \in A \times A \mid a + \underbrace{z + \cdots + z}_\text{i times} = b + \underbrace{z + \cdots + z}_\text{i times},\; \text{ for all }  i\geq t \}
    \\ &=
    \{ (a,b) \in A \times A \mid c+ a  = c+ b  \text{ for some } c \in A \}
    \\&=\eta_0
\end{align*}
Note that $(a,b) \in \eta$ implies that $\lambda'_a =\lambda'_b$. Hence, it follows
from Proposition~4.2 in \cite{JKV} that
the (least) cancellative congruence  on $(M,\circ)$ is
 $$\eta_M =\{ ((a,\lambda'_a),(b,\lambda'_b)) \mid (a,b)\in \eta \}.$$
It follows that the natural map
   $$M/\eta_M \longrightarrow (A/\eta ) \rtimes \im(\lambda')$$
i.e.
  $\overline{( a,\lambda'_a)} \mapsto (\bar a , \lambda'_a)$,
is an injective monoid homomorphism and $M/\eta_M$ is a regular
submonoid of $ (A/\eta ) \rtimes \im(\lambda')$. So  we obtain a
bijective 1-cocycle $(M/\eta_M , \circ) \longrightarrow (A/\eta
,+)$, with respect to $\overline{\lambda}$, that extends the mapping
$\overline{ (a,\lambda'_a)} \mapsto \overline{a}$. Because $r$ is
bijective we know (see explanation in Example~\ref{ex1}) that
$(A,+)$ consists of normal elements and thus $(A/\eta ,+)$ is a left
and right Ore monoid and also $(M/\eta_M,\circ)$ is a left and right
Ore monoid. Hence they both have a group of fractions, denoted
$\gr(A/\eta)$ and $\gr((M/\eta_M)$ respectively. It easily is
verified that $\gr (M/\eta_M)=G(X,r)$, the structure group of
$(X,r)$, $\gr(A/\eta)=G(X,r')$, the structure group of the derived
solution $(X,r')$, $\gr (M/\eta_M) \subseteq \gr(A/\eta) \rtimes \im
(\lambda')$, where by abuse of notation   $\lambda': \gr (M/\eta_M)
\rightarrow \Aut (A/\eta)$ is the natural extension of the mapping
$\bar \lambda$ and also $ \gr (M/\eta_M)$ is a regular subgroup of
$\gr(A/\eta) \rtimes \im (\lambda')$. The latter was proven by Lebed
and Vendramin in \cite[Theorem~3.4.]{LV} in case $(X,r)$ is
bijective, (left and right) non-degenerate and finite. }
\end{remark}

\begin{question}
If $(X,r)$ is a left non-degenerate solution of the YBE does there exist
a bijective 1-cocycle $(M/\eta_M , \circ) \longrightarrow (A/\eta,+)$,
with respect to $\overline{\lambda}$, that extends the  mapping $\overline {(a,\lambda'_a)} \mapsto \overline{a}$.
 In other words, can one avoid the bijective assumption in Remark~\ref{remarkbijective}?
\end{question}

Let $\bar \phi\colon \bar M\times\bar M\longrightarrow \bar M$ be
the map defined by $\bar\phi(\bar a,\bar b)=\bar\lambda_{\bar
a}(\bar b)$, for all $a,b\in M$. Then $(\bar M,+,\circ,\bar \phi)$
is a left semi-truss.

\begin{lemma}\label{normal}
Let $a,b\in M=M(X,r)$. Then there exists $c\in M$ such that $a+b=b+c$.
\end{lemma}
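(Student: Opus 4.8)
The plan is to reduce the statement to a structural fact about the derived solution and then invoke Theorem~\ref{main}. Since $(X,r)$ is left non-degenerate, recall from Example~\ref{ex1} that the derived solution $(X,r')$, with $r'(x,y)=(y,\sigma_y(\gamma_{\sigma^{-1}_x(y)}(x)))$, is a set-theoretic solution of the YBE and that, via the bijective $1$-cocycle $\pi$ of Proposition~\ref{cocycle}, the monoid $(M,+)=(M(X,r),+)$ is identified with the structure monoid $M(X,r')$; in particular $(M,+)$ \emph{is} the structure monoid of the solution $(X,r')$, and it suffices to produce, for $a,b\in M$, an element $c\in M$ with $a + b = b + c$ working in this monoid.

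I would then apply Theorem~\ref{main} to $(X,r')$. Write $r'(x,y)=(s_x(y),t_y(x))$, so that $s_x=\id_X$ for every $x\in X$. Theorem~\ref{main} furnishes a left action $\Lambda\colon (M,+)\to\Map(M,M)$ and a right action $P\colon (M,+)\to\Map(M,M)$ (the analogues, for $(X,r')$, of the maps $\lambda$ and $\rho$ of Section~\ref{sectiontwo}) such that, by (\ref{eqproduct}), $a + b=\Lambda_a(b) + P_b(a)$ for all $a,b\in M$.

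The key point is that $\Lambda_a=\id_M$ for every $a\in M$. Using the defining formula (\ref{lambda1}) for $\Lambda$ — which for $(X,r')$ reads $\Lambda_x(y_1 + \dots + y_n)=s_x(y_1) + \Lambda_{t_{y_1}(x)}(y_2 + \dots + y_n)$ — together with $s_x=\id_X$, an easy induction on $n$ gives $\Lambda_x(y_1 + \dots + y_n)=y_1 + \dots + y_n$ for every $x\in X$ and all $y_1,\dots,y_n\in X$ (base case $\Lambda_x(y_1)=s_x(y_1)=y_1$; inductive step: the right-hand side equals $y_1 + (y_2 + \dots + y_n)$ by the induction hypothesis applied to the generator $t_{y_1}(x)$ and the shorter element $y_2+\dots+y_n$). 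Hence $\Lambda_x=\id_M$ for all $x\in X$, and then the multiplicativity relation (\ref{lambda2}) gives $\Lambda_a=\Lambda_{x_1}\circ\dots\circ\Lambda_{x_m}=\id_M$ whenever $a=x_1 + \dots + x_m$; so $\Lambda_a=\id_M$ for every $a\in M$. Substituting this into $a + b=\Lambda_a(b) + P_b(a)$ yields $a + b=b + P_b(a)$, so $c:=P_b(a)\in M$ does the job.

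I do not expect a genuine obstacle here: the only thing to watch is the bookkeeping of the identification $(M,+)=M(X,r')$ and the application of the inductive formulas (\ref{lambda1})--(\ref{lambda2}) to the \emph{derived} solution $(X,r')$ rather than to $r$. Should one prefer to avoid Theorem~\ref{main}, the same conclusion follows by a direct double induction inside $A(X,r)$ on the lengths of $a$ and $b$, repeatedly applying the defining relation $x+\sigma_x(y)=\sigma_x(y)+\sigma_{\sigma_x(y)}\gamma_y(x)$ and the bijectivity of the $\sigma_x$ to move the generators of $a$ past $b$ one at a time — essentially the computation packaged inside the proof of Theorem~\ref{main}.
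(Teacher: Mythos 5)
Your proof is correct, but it follows a genuinely different route from the paper's. The paper proves Lemma~\ref{normal} by a self-contained double induction on the total degree $n+m$ of $a=x_1+\dots+x_n$ and $b=y_1+\dots+y_m$, using only the defining relation $x+y=y+\sigma_y(\gamma_{\sigma_x^{-1}(y)}(x))$ of $(M,+)$ to push the generators of $a$ past $b$ one at a time; this is exactly the ``fallback'' you sketch in your last sentence. Your main argument instead exploits the identification $(M,+)=A(X,r)=M(X,r')$ from Example~\ref{ex1} and applies Theorem~\ref{main} to the derived solution $(X,r')$: since the first component of $r'$ is the identity, the induced left action $\Lambda$ is trivial (your induction via (\ref{lambda1})--(\ref{lambda2}) is correct), so (\ref{eqproduct}) collapses to $a+b=b+P_b(a)$ and one even gets an explicit witness $c=P_b(a)$ given by the right action. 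What this buys is brevity and a canonical formula for $c$; what it costs is reliance on two external inputs that the paper's induction avoids, namely the full strength of the Gateva-Ivanova--Majid extension theorem and, more importantly, the fact that $(X,r')$ is itself a set-theoretic solution of the YBE. The paper asserts the latter (attributing the construction to Soloviev) but never proves it for arbitrary left non-degenerate, not necessarily bijective, solutions, so your argument is conditional on that standard fact, whereas the paper's elementary induction needs only that the relations of $(M,+)$ have the stated form. Both arguments are valid; yours is the more structural, the paper's the more economical in hypotheses.
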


\begin{proof}
There exist non-negative integers $n,m$ and $x_1,\dots,
x_n,y_1,\dots,y_m\in X$ such that $a=x_1+\dots +x_n$ and
$b=y_1+\dots + y_m$. Clearly we may assume that $n,m$ are positive
integers. We shall prove the result by induction on $n+m$. If
$n=m=1$, then
$x_1+y_1=y_1+\sigma_{y_1}(\gamma_{\sigma^{-1}_{x_1}(y_1)}(x_1))$, by
the defining relations of $(M,+)$. Suppose that $m+n>2$, and that
the result is true for $m+n-1$. If $n>1$, then by the induction
hypothesis there exists $c'\in M$ such that $a+b=x_1+b+c'$, and by
the induction hypothesis again there exists $c''\in M$ such that
$x_1+b=b+c''$. Hence $a+b=b+c''+c'$, in this case. Suppose that
$n=1$. In this case $m>1$ and
$$a+b=x_1+b=y_1+\sigma_{y_1}(\gamma_{\sigma^{-1}_{x_1}(y_1)}(x_1))+y_2+\dots +y_m.$$
Hence, by the induction hypothesis, there exists $c\in M$ such that
$$\sigma_{y_1}(\gamma_{\sigma^{-1}_{x_1}(y_1)}(x_1))+y_2+\dots +y_m=y_2+\dots +y_m+c.$$
Thus $a+b=b+c$, in this case. Therefore the result follows by
induction.
\end{proof}

By Lemma~\ref{normal}, the left cancellative monoid $(\bar M,+)$ satisfies that
for all $\bar a,\bar b\in\bar M$,   there exists a unique $\bar c\in
\bar M$ such that $\bar a+\bar b=\bar b+\bar c$.  So,
the multiplicative monoid $(\bar M, \circ)$ is  left cancellative.
Hence, we have the following corollary.

\begin{corollary}  Let $(X,r)$ be a left non-degenerate set-theoretic solution of the YBE. Let $\eta$ be the left cancellative congruence on $(M(X,r'),+)$.
Then $(\bar M,+,\circ,\bar \phi)$ is a  left semi-truss   with $\bar M +\bar a \subseteq\bar a +\bar M$ for all $\bar a \in \bar M$ and it satisfies the conditions of Proposition~\ref{st2},
with $\bar\phi(\bar a,\bar b)=\bar\lambda_{\bar
a}(\bar b)$, for all $\bar a,\bar b\in \bar M$.
In particular,  $(\bar M,\bar r)$, where
$$\bar r(\bar a,\bar b)=(\bar \lambda_{\bar a}(\bar b),\bar \lambda^{-1}_{\bar \lambda_{\bar a}(\bar b)}(c(\bar a,\bar \lambda_{\bar a}(\bar b)))),$$
for all $\bar a,\bar b\in \bar M$, is a left non-degenerate
set-theoretic solution of the YBE.
In particular, $(\bar X, \bar r_{| \bar X})$ is a left non-degenerate solution on the image $\bar X$ of $X$ in $\bar M$.
\end{corollary}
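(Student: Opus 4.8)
The plan is to verify the hypotheses of Proposition~\ref{st2} for the triple $(\bar M,+,\circ)$ together with $\bar\lambda$, invoke that proposition, and then dispatch the restriction to $\bar X$ by a short computation on generators. Everything nontrivial has already been isolated in the preceding material, so the argument is largely a matter of assembling it.

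First I would record that $(\bar M,+)$ and $(\bar M,\circ)$ are monoids: they are the quotients of $(M,+)$ and $(M,\circ)$ by $\eta$, which is a congruence for both operations by Lemma~\ref{congruence}. The discussion preceding the statement already shows that $\bar\lambda\colon(\bar M,\circ)\to\Aut(\bar M,+)$ is a well-defined homomorphism with $\bar a\circ\bar b=\bar a+\bar\lambda_{\bar a}(\bar b)$; since each $\bar\lambda_{\bar a}$ is additive, $\bar a\circ(\bar b+\bar c)=\bar a+\bar\lambda_{\bar a}(\bar b)+\bar\lambda_{\bar a}(\bar c)=(\bar a\circ\bar b)+\bar\phi(\bar a,\bar c)$, so $(\bar M,+,\circ,\bar\phi)$ is a left semi-truss with $\bar\phi(\bar a,\bar b)=\bar\lambda_{\bar a}(\bar b)$. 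Next, given $\bar a,\bar b\in\bar M$, Lemma~\ref{normal} provides $c\in M$ with $a+b=b+c$ in $M$, hence $\bar a+\bar b=\bar b+\bar c$; this $\bar c$ is unique because $(\bar M,+)$ is left cancellative, which is precisely what defining $\eta$ as the least left-cancellative congruence on $(M(X,r'),+)$ guarantees. In particular $\bar M+\bar a\subseteq\bar a+\bar M$ for all $\bar a$. All hypotheses of Proposition~\ref{st2} now hold, and that proposition yields at once that $(\bar M,\bar r)$, with $\bar r$ given by the displayed formula, is a left non-degenerate set-theoretic solution of the YBE.

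It remains to treat the final sentence, for which I would compute $\bar r$ on the image $\bar X$. For $x,y\in X$ the first coordinate of $\bar r(\bar x,\bar y)$ is $\bar\lambda_{\bar x}(\bar y)=\overline{\sigma_x(y)}\in\bar X$. For the second coordinate, recall the identification $M=A(X,r)=M(X,r')$ via $\pi$: in $(M,+)$ the defining relation of $A(X,r)$ reads $x+\sigma_x(y)=\sigma_x(y)+\sigma_{\sigma_x(y)}(\gamma_y(x))$, so by the uniqueness of $c(\cdot,\cdot)$ one gets $c(\bar x,\overline{\sigma_x(y)})=\overline{\sigma_{\sigma_x(y)}(\gamma_y(x))}$; and since $\bar\lambda_{\overline{\sigma_x(y)}}(\overline{\gamma_y(x)})=\overline{\sigma_{\sigma_x(y)}(\gamma_y(x))}$, applying $\bar\lambda^{-1}_{\overline{\sigma_x(y)}}$ gives the second coordinate $\overline{\gamma_y(x)}\in\bar X$. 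Thus $\bar r(\bar x,\bar y)=(\overline{\sigma_x(y)},\overline{\gamma_y(x)})$, so $\bar X\times\bar X$ is invariant under $\bar r$; the braided equation, valid on all of $\bar M^3$, therefore restricts to $\bar X^3$, and $(\bar X,\bar r_{|\bar X})$ is a set-theoretic solution. It is left non-degenerate because $\bar y\mapsto\bar\lambda_{\bar x}(\bar y)=\overline{\sigma_x(y)}$ maps $\bar X$ onto $\bar X$ (as $\sigma_x$ is a bijection of $X$) and is injective on $\bar X$ (as $\bar\lambda_{\bar x}$ is injective on $\bar M$).

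I do not expect a genuine obstacle: Proposition~\ref{st2} does the heavy lifting, and Lemmas~\ref{congruence} and~\ref{normal} have already established the two facts that require real work (that $\eta$ is a $\circ$-congruence along which $\lambda'$ descends, and that the additive quotient is ``normal''). The only point needing care is bookkeeping around the $\pi$-identification $M=A(X,r)=M(X,r')$ — keeping straight which relations live where — so that the computation of $\bar r_{|\bar X}$ genuinely lands in $\bar X$ and reproduces the quotient image of $r$.
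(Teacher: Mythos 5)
Your proposal is correct and follows essentially the same route as the paper: the corollary is obtained by combining Lemma~\ref{congruence}, the well-definedness of $\bar\lambda$, and Lemma~\ref{normal} (existence of $\bar c$, with uniqueness from left cancellativity of $(\bar M,+)$) so that Proposition~\ref{st2} applies. Your explicit check that $\bar r(\bar x,\bar y)=(\overline{\sigma_x(y)},\overline{\gamma_y(x)})$, hence that $\bar X^2$ is $\bar r$-invariant, is a correct elaboration of the final sentence that the paper leaves implicit.
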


We say that a left non-degenerate solution $(X,r)$ of the YBE is
injective if the natural map $X\rightarrow M/\eta$ is injective.
Obvious such examples are irretractable solutions, and in this case $r = \bar
r|_{{\bar X}^2}$. Note that if  $r$ also is bijective and
non-degenerate then  this notion corresponds with the one introduced
by Lebed and Vendramin in \cite{LV}. In \cite{LV} it also is shown
that, in this case, several properties of involutive solutions can
be generalized to injective ones.

\begin{corollary}
Any left non-degenerate injective set-theoretic solution $(X,r)$  of the YBE is the restriction of the induced
 left-non-degenerate solution of the YBE determined by a  left cancellative semi-truss $(M,+,\circ , \phi)$ with $M+a\subseteq a+M$ for all $a\in M$.
\end{corollary}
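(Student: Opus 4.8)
This combines the preceding Corollary with a short computation on the generators. The left cancellative semi-truss referred to in the statement is $\bar M=M(X,r)/\eta$ equipped with the operations $+$, $\circ$ and $\bar\phi$; by the preceding Corollary (which rests on Lemma~\ref{normal}, Lemma~\ref{congruence} and Proposition~\ref{st2}), $(\bar M,+,\circ,\bar\phi)$ is a left cancellative left semi-truss with $\bar M+\bar a\subseteq\bar a+\bar M$ for all $\bar a\in\bar M$, and the induced map $\bar r(\bar a,\bar b)=(\bar\lambda_{\bar a}(\bar b),\bar\lambda^{-1}_{\bar\lambda_{\bar a}(\bar b)}(c(\bar a,\bar\lambda_{\bar a}(\bar b))))$ is a left non-degenerate set-theoretic solution of the YBE on $\bar M$. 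Hence it is enough to show that the image $\bar X$ of $X$ in $\bar M$ is invariant under $\bar r$ and that, for an injective solution, the natural bijection $X\to\bar X$ intertwines $r$ with $\bar r|_{\bar X\times\bar X}$.

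For the first point I would evaluate $\bar r$ on the generators. Fix $x,y\in X$ and write $\bar x,\bar y$ for their images in $\bar M$. Since $\bar\lambda_{\bar x}(\bar y)=\overline{\lambda'_x(y)}=\overline{\sigma_x(y)}$, the first coordinate of $\bar r(\bar x,\bar y)$ is $\overline{\sigma_x(y)}\in\bar X$. For the second coordinate, recall that under the identification $M(X,r)=A(X,r)$ coming from the bijective $1$-cocycle $\pi$, the additive monoid $(M,+)$ is $A(X,r)$, so the defining relation $x+\sigma_x(y)=\sigma_x(y)+\sigma_{\sigma_x(y)}(\gamma_y(x))$ of $A(X,r)$ holds in $(M,+)$, hence also in $\bar M$. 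Since $(\bar M,+)$ is left cancellative, $c(\bar x,\overline{\sigma_x(y)})$ is the unique element with $\bar x+\overline{\sigma_x(y)}=\overline{\sigma_x(y)}+c(\bar x,\overline{\sigma_x(y)})$, and comparison with the relation above forces $c(\bar x,\overline{\sigma_x(y)})=\overline{\sigma_{\sigma_x(y)}(\gamma_y(x))}$. As $\bar\lambda_{\overline{\sigma_x(y)}}(\overline{\gamma_y(x)})=\overline{\sigma_{\sigma_x(y)}(\gamma_y(x))}$, applying $\bar\lambda^{-1}_{\overline{\sigma_x(y)}}$ shows the second coordinate of $\bar r(\bar x,\bar y)$ equals $\overline{\gamma_y(x)}\in\bar X$. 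Thus $\bar r(\bar x,\bar y)=(\overline{\sigma_x(y)},\overline{\gamma_y(x)})$, so in particular $\bar X\times\bar X$ is invariant under $\bar r$.

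Finally, if $(X,r)$ is injective then $x\mapsto\bar x$ is a bijection from $X$ onto $\bar X$, and the last formula says precisely that this bijection carries $r$ onto $\bar r|_{\bar X\times\bar X}$; thus $(X,r)$ is (isomorphic to) the restriction to $\bar X$ of the left non-degenerate solution $(\bar M,\bar r)$ determined by the left cancellative semi-truss $(\bar M,+,\circ,\bar\phi)$, and the latter satisfies $\bar M+\bar a\subseteq\bar a+\bar M$ for all $\bar a\in\bar M$. I expect no real obstacle: everything follows from results already established, the only thing requiring a little care being to keep the identification $M(X,r)=A(X,r)$ consistent so that the defining relation of $A(X,r)$ may legitimately be used inside $(\bar M,+)$.
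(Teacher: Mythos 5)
Your proposal is correct and follows exactly the route the paper intends: the paper states this corollary without a written proof, treating it as immediate from the preceding corollary together with the assertion that $r=\bar r|_{\bar X^2}$ for injective solutions, and your computation $\bar r(\bar x,\bar y)=(\overline{\sigma_x(y)},\overline{\gamma_y(x)})$ (using the defining relation of $A(X,r)$ transported to $(M,+)$ via $\pi$ and the uniqueness of $c(\bar a,\bar b)$ in the left cancellative quotient) is precisely the verification the paper leaves implicit.
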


However,  note that  $(\bar M, \circ )$ is not necessarily the structure monoid of the solution
of $(\bar X , \bar r)$. Indeed, let $X=\Sym_3$ be the symmetric group of degree $3$. Let $(X,r)$ be the bijective non-degenerate solution defined by $r(a,b)=(aba^{-1},a)$ for all $a,b\in X$.
Note that the solution $(X,r)$ is non-involutive and  irretractable (because the center of $\Sym_3$ is trivial).
So, $X$ is naturally embedded in $(\bar M, \circ) =(M(X,r)/\eta ,\circ)$ and $\bar r_{|\bar X^{2}}=r$.
Let us denote the multiplication in the structure monoid  $M(X,r)$ by $\cdot$. In  $(M(X,r),\cdot )$
we  have
\begin{align*}
(1,2) \cdot (1,2,3) \cdot (1,2,3) \cdot (1,2,3) &= (1,3,2) \cdot (1,3,2) \cdot (1,3,2) \cdot (1,2)\\
&= (1,3,2) \cdot (1,3,2) \cdot (1,3) \cdot (1,3,2)\\
&= (1,3,2) \cdot (2,3) \cdot (1,3,2) \cdot (1,3,2) \\
&=(1,2) \cdot (1,3,2) \cdot (1,3,2) \cdot (1,3,2)
\end{align*}
while  $(1,2,3)\cdot (1,2,3) \cdot (1,2,3)\neq (1,3,2)\cdot (1,3,2) \cdot (1,3,2)$.
Hence, $M(X,r)$ is not left cancellative, while $\bar M$ is left cancellative.
Thus   $\bar M$ is not the structure monoid of $(X, r)$.

The following  problem remains a challenge.

\begin{question}
Determine when a left non-degenerate solution $(X,r)$ of the YBE is
cancellative injective. If $(X,r)$ is a left  non-degenerate solution
that is injective then does there exists a finite left cancellative
semi-truss in which $X$ can be embedded naturally? In case $r$ also
is finite, bijective and non-degenerate this has been proven by
Lebed and Vendramin in \cite{LV}.
\end{question}

\noindent {\bf Acknowledgement.} The authors would like to thank the referee
for providing  a sub- and superscript avoiding,  more elegant  and simple proof of Theorem~\ref{MainIrr} by  using the notation and a result of  Rump  proved in \cite{R}.

\vspace{30pt}
 \noindent \begin{tabular}{llllllll}
  Ferran Ced\'o && Eric Jespers\\
 Departament de Matem\`atiques &&  Department of Mathematics \\
 Universitat Aut\`onoma de Barcelona &&  Vrije Universiteit Brussel \\
08193 Bellaterra (Barcelona), Spain    && Pleinlaan 2, 1050 Brussel, Belgium \\
 cedo@mat.uab.cat && Eric.Jespers@vub.be \\ \\
 Charlotte Vermwimp && \\ Department of
Mathematics &&
\\  Vrije Universiteit Brussel &&\\
Pleinlaan 2, 1050 Brussel, Belgium  &&\\
Charlotte.Verwimp@vub.be &&
\end{tabular}

\end{document}